\newcommand{\showcomments}{yes}
\renewcommand{\showcomments}{no}
\newcommand{\hidetodo}[1]
{\ifthenelse{\equal{\showcomments}{yes}}%
{#1}
}
\newsavebox{\commentbox}
\newenvironment{com}%
{\ifthenelse{\equal{\showcomments}{yes}}%
{\footnotemark
        \begin{lrbox}{\commentbox}
        \begin{minipage}[t]{1.25in}\raggedright\sffamily\tiny
        \footnotemark[\arabic{footnote}]}
{\begin{lrbox}{\commentbox}}}%
{\ifthenelse{\equal{\showcomments}{yes}}%
{\end{minipage}\end{lrbox}\marginpar{\usebox{\commentbox}}}
{\end{lrbox}}}
\newtheorem{thm}{Theorem}[section]
\newtheorem{lem}[thm]{Lemma}
\newtheorem{cor}[thm]{Corollary}
\newtheorem{prop}[thm]{Proposition}
\theoremstyle{definition}
\newtheorem{defn}[thm]{Definition}
\newtheorem{rem}[thm]{Remark}
\newtheorem{exmp}[thm]{Example}
\DeclareMathOperator{\rank}{rank}
\DeclareMathOperator{\Aut}{Aut}
\DeclareMathOperator{\stab}{Stab}
\DeclareMathOperator{\redrank}{\widetilde{rank}}
\newcommand{\homology}{\ensuremath{{\sf{H}}}}
\newcommand{\field}[1]{\mathbb{#1}}
\newcommand{\integers}{\ensuremath{\field{Z}}}
\newcommand{\reals}{\ensuremath{\field{R}}}
\newcommand{\boundary}   {{\ensuremath \partial}}
\newcommand{\euler}{\chi}
\newcommand{\p}{\textup{\textsf{p}}}
\newcommand{\edges}{\mathcal{E}}
\newcommand{\vertices}{\mathcal{V}}
\begin{document}

\title[Counting cycles in labeled graphs]{Counting cycles in labeled graphs: The nonpositive immersion property for one-relator groups}
\author{Joseph Helfer }
           \address{Dept. of Math. \& Stats.\\
                    McGill Univ. \\
                    Montreal, QC, Canada H3A 0B9 }
\email{joseph@helfer.ca \, \, \, wise@math.mcgill.ca}
\author[D.~T.~Wise]{Daniel T. Wise}
\subjclass[2010]{20F67, 20F65, 20E06}
\keywords{One-relator groups, orderable groups, inverse automata}
\date{\today}
\thanks{Research supported by NSERC}

\maketitle

\begin{com}
{\bf \normalsize COMMENTS\\}
ARE\\
SHOWING!\\
\end{com}

\begin{abstract}
  We prove a rank~1 version of the Hanna Neumann Theorem.  This shows
  that every one-relator $2$-complex without torsion has the nonpositive
  immersion property.  The proof generalizes to staggered and
  reducible $2$-complexes.
 \end{abstract}

\section{Introduction}

A \emph{deterministically labeled digraph} $\Gamma$ is a nonempty
graph whose edges are directed and labeled with the letters from an
alphabet $\{a_1, a_2, \ldots \}$, with the additional property that at
each vertex of $\Gamma$, no two outgoing edges have the same label,
and no two incoming edges have the same label. These are termed
 \emph{inverse automata} in the computer science literature. Let $w$ be
a nonempty word in $\{a_1^{\pm1},a_2^{\pm1}, \ldots\}$ that is
\emph{reduced} in the sense that no two consecutive letters of $w$ are
inverse to each other, and the first and last letters of $w$ are not
inverse to each other. We assume that $w$ is \emph{simple} in the
sense that $w\neq v^p$ for any word $v$ and $p>1$.  A \emph{$w$-cycle}
in $\Gamma$ is a closed based path in $\Gamma$ whose label is of the
form $w^n$ for some $n\geq 1$.  Two $w$-cycles in $\Gamma$ are
\emph{equivalent} if there is a path with label $w^m$ joining their
initial vertices for some $m\geq 1$.  The number of equivalence
classes of $w$-cycles in $\Gamma$ is denoted by $\overline
\#_w(\Gamma)$.  Finally, let $\beta_1(\Gamma) =
\rank(\homology_1(\Gamma))$ be the first Betti number of $\Gamma$, and
recall that $\beta_1(\Gamma) = |\edges(\Gamma)|-|\vertices(\Gamma)|+
1$ when $\Gamma$ is finite, connected, and nonempty. We use the
notation $\edges(Y)=\operatorname{Edges}(Y)$ and
$\vertices(Y)=\operatorname{Vertices}(Y)$ for a complex $Y$.

\begin{figure}[t]\centering
\includegraphics[width=.25\textwidth]{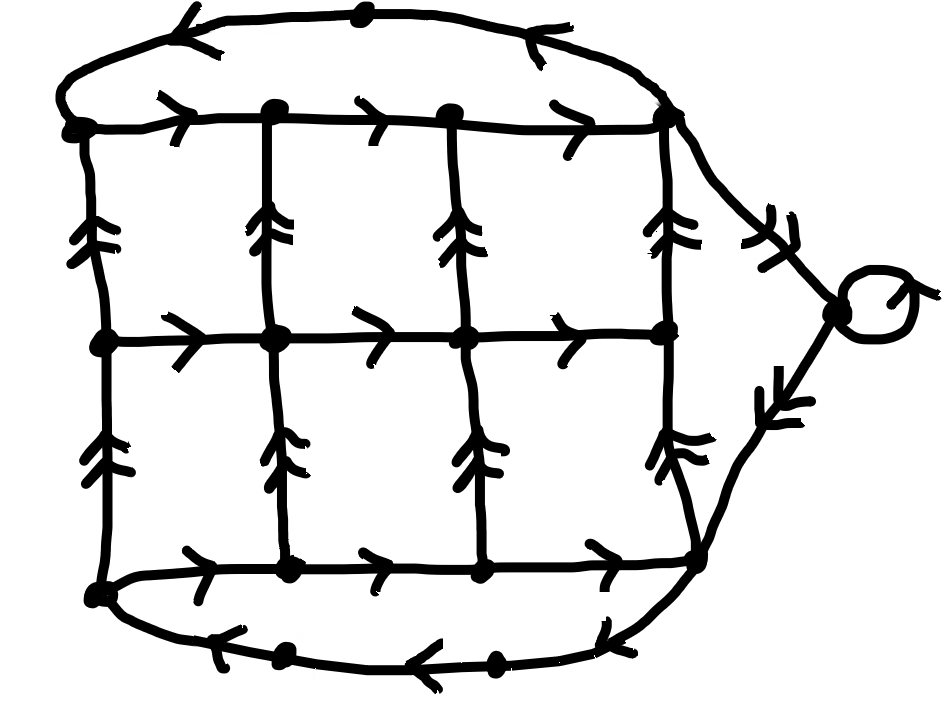}
\caption{\label{fig:NotACat}
Some values of $\#_w$ and $\overline \#_w$ for a deterministically labeled digraph.}
$\begin{array}{c||c|c|c|c}
               & w=aba^{-1}b^{-1} & w=b & w=a^{30}b^2a^{30}b^{-2} & w=a \\
 \hline
 \hline
\#_w           & 6 & 4 & 5& 12\\[.5ex]
\overline \#_w &  6 & 1 & 5& 3
\end{array}$
\vspace{-3mm}
\end{figure}

In this paper we prove the following naive statement illustrated in Figure~\ref{fig:NotACat}:
\begin{thm}\label{thm:word main}
  Let $\Gamma$ be a deterministically labeled finite digraph, and let
  $w$ be a reduced simple word in its alphabet.  Then $\overline
  \#_w(\Gamma) \leq \beta_1(\Gamma)$.
\end{thm}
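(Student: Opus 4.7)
The plan is to associate to each equivalence class $[\gamma]$ of $w$-cycles the subgraph $\Lambda_{[\gamma]} \subseteq \Gamma$ traversed by any (equivalently every) representative walk, and then to force $\beta_1(\Gamma)$ to grow by one for each distinct class via a greedy construction of subgraphs.

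The key lemma I would prove first is: if $\Lambda \subseteq \Gamma$ is a subgraph and $[\gamma]$ is an equivalence class whose walk uses at least one edge of $\Gamma \setminus \Lambda$, then $\beta_1(\Lambda \cup \Lambda_{[\gamma]}) > \beta_1(\Lambda)$. The justification exploits that $\Gamma$ is one-dimensional: the homology $\homology_1$ of any subgraph is just its space of $1$-cycles, inclusion of subgraphs induces an injection on $\homology_1$, and the walk of $\gamma$ is a $1$-cycle in $\Lambda \cup \Lambda_{[\gamma]}$ whose support meets an edge outside $\Lambda$, so it cannot lie in the subspace $\homology_1(\Lambda)$.

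With this lemma in hand, I would proceed by induction on $|\edges(\Gamma)|$. If some edge is not traversed by any $w$-cycle, I delete it: this preserves $\overline{\#}_w(\Gamma)$ and does not increase $\beta_1$, so the induction hypothesis finishes the case. Otherwise every edge is used, and I build an increasing chain $\Lambda_0 \subsetneq \Lambda_1 \subsetneq \cdots$ by greedily choosing at each step some previously unchosen class whose walk introduces a new edge. The key lemma yields $\beta_1(\Lambda_i) \geq i$. If the process exhausts all $k := \overline{\#}_w(\Gamma)$ classes, then $k \leq \beta_1(\Lambda_k) \leq \beta_1(\Gamma)$ and we are done. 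Otherwise it halts at some $i < k$ with every remaining class trapped inside $\Lambda_i$; provided $\Lambda_i \subsetneq \Gamma$, the induction hypothesis applied to $\Lambda_i$ (which still carries all $k$ equivalence classes, since equivalence via a $w^m$-path along an orbit's walk is preserved when passing to the subgraph containing that walk) delivers $k \leq \beta_1(\Lambda_i) \leq \beta_1(\Gamma)$.

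The main obstacle I anticipate is the residual case $\Lambda_i = \Gamma$ with $i < k$, where the edge-count induction gives no progress. This is precisely where the hypothesis that $w$ is simple (not merely reduced) must be used essentially: the easy example of an $n$-cycle labeled $a$ with $w = a^p$ and $\gcd(n,p) > 1$ produces several equivalence classes sharing a single walk, so this residual case genuinely occurs and the naive greedy fails. Translating the simplicity of $w$ into a combinatorial guarantee that distinct equivalence classes must eventually introduce new edges — equivalently, that the walks of distinct classes cannot all nest inside the union of some strict subset — is, I expect, the heart of the argument.
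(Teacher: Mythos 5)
There are two genuine gaps, and the more serious one is that your key lemma is false as stated. Take $\Gamma$ with vertices $p,q$, loops $a$ at $p$ and $b$ at $q$ (both labeled $x$), and an edge $c$ from $p$ to $q$ labeled $y$; take $w = xyxy^{-1}$, which is reduced and simple. The one $w$-cycle at $p$ traverses $a c b c^{-1}$, so $\Lambda_{[\gamma]}=\Gamma$. With $\Lambda = \{a\}\cup\{b\}$ one has $\beta_1(\Lambda) = 2 = \beta_1(\Lambda\cup\Lambda_{[\gamma]})$, with no strict increase, even though the walk uses the edge $c\notin\Lambda$. The reason your homological justification fails is that it conflates the \emph{support of the walk} (the set of traversed edges) with the \emph{support of the resulting $1$-cycle} (the set of edges appearing with nonzero coefficient). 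An edge traversed the same number of times in each direction contributes nothing to the homology class; here the class is $[a]+[b]\in \homology_1(\Lambda)$ despite the walk passing through $c$. This is not a fringe phenomenon: for commutator-type relators like $aba^{-1}b^{-1}$ the homology class of every $w$-cycle is trivial, so no homological argument of this shape can see them. Already within your greedy chain one can engineer three $w$-cycles along a chain of vertices so that $\beta_1(\Lambda_3)=\beta_1(\Lambda_2)$, although with enough slack built up one still gets $\beta_1(\Lambda_i)\geq i$ in that specific example; but the mechanism you are leaning on is broken, and you have not provided a substitute.

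The second gap is one you flag yourself: the residual case $\Lambda_i=\Gamma$ with $i<k$. You observe (with the $n$-cycle and $w=a^p$ example) that when $w$ is a proper power the conclusion can genuinely fail, so simplicity is essential; but you leave as a hope the crucial implication that simplicity rules out the residual case. That implication is not at all elementary — it is, as you correctly guess, the heart of the matter. The paper's route is quite different and is built precisely to resolve both issues at once: it uses that $\pi_1$ of the relevant one-relator $2$-complex is locally indicable (Howie), hence left-orderable (Burns--Hale), and from the left-order it manufactures a $\pi_1$-invariant preorder on the edges of the universal cover. Each $w$-line in $\widetilde\Gamma$ then has a well-defined maximal edge (a ``pre-widge''), and distinct $w$-lines have distinct maximal edges; projecting down, each equivalence class of $w$-cycles contributes one ``widge'' edge in $\Gamma$, and removing the widges yields ``isles.'' The count $\overline{\#}_w(\Gamma) < \beta_1(\Gamma) + \overline T$ (where $\overline T$ counts tree isles) follows from an Euler-characteristic computation, and orderability plus a minimality/descent argument rules out more than one tree isle. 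Your proposal avoids orderability entirely, and it is exactly at the two places where orderability does real work — controlling when a new $w$-cycle is homologically invisible, and forbidding several inequivalent $w$-cycles from saturating the same support — that your argument has gaps. Without some replacement for the order-theoretic input, I do not see how to complete the elementary greedy approach.
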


A proof is given at the end of Section~\ref{sec:bislim}.

Theorem~\ref{thm:word main} was conjectured in
\cite{WiseAnnouncement03}, as part of a program to prove that every
one-relator group is coherent. See Remark~\ref{rem:coherence}.
Theorem~\ref{thm:word main} was proven when $w$ is a \emph{positive}
word in the sense that it has no $a_i^{-1}$ in
\cite{WisePositiveRelatorCoherence02}.  The inequality $\overline
\#_w(\Gamma) \leq 2\beta_1(\Gamma)$ was proven in
\cite{WiseHNCoherence} under the assumption that the Strengthened
Hanna Neumann Conjecture holds.  This latter conjecture was recently
proven in \cite{Friedman2015, Mineyev2012, DicksOnePageProof2011}.
The spirit of Dicks' proof which was extracted from Mineyev's
argument, and the realization that orderability should play a critical
role here, has inspired this note. The connection with the
Strengthened Hanna Neumann Theorem and a sense in which this is a
rank-one version of it is explained in Section~\ref{sec:shnc}, which
can be read independently of the other sections.

Lars Louder and Henry Wilton have independently proven Theorem~\ref{thm:word main} in \cite{LouderWilton2014}.
Their lovely proof is more geometrically palpable than ours,
yet also relies on orderability in a fundamental way.

\begin{defn}\label{defn:npi}
  A $2$-complex $X$ has \emph{nonpositive immersions} if for every
  combinatorial immersion $Y\rightarrow X$ with $Y$ compact and
  connected, either $\euler(Y)\le 0$ or $Y$ is contractible.
  We refer to \cite{WiseNonPositiveCoherence02} for a variety of classes of $2$-complexes with nonpositive immersions.
\end{defn}

The motivation for Theorem~\ref{thm:word main} is the following consequence which is a special case of
Theorem~\ref{thm:npi superduper}:

\begin{thm}\label{thm:npi}
Let $X$ be a $2$-complex with a single $2$-cell whose attaching map is not homotopic to a path of the form $v^n$ where $n>1$ and $v\rightarrow X^1$ is a closed path.
Then $X$ has nonpositive immersions.
\end{thm}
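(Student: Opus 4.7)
The plan is to deduce the theorem by applying Theorem~\ref{thm:word main} to the $1$-skeleton $\Gamma = Y^1$ of an arbitrary combinatorial immersion $Y\to X$ with $Y$ compact and connected. If $Y$ has no $2$-cells then $Y=\Gamma$ is a finite connected graph, and $\chi(Y)=1-\beta_1(\Gamma)\le 1$ with equality exactly when $Y$ is a tree; so either $\chi(Y)\le 0$ or $Y$ is contractible. Assume therefore that $Y$ has at least one $2$-cell, and let $n$ be the number of $2$-cells.

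The core counting bound is $n\le \overline\#_w(\Gamma)$, which I obtain from two observations. First, distinct $2$-cells of $Y$ must have distinct attaching cycles in $\Gamma$: each $2$-cell of $Y$ maps homeomorphically onto the unique $2$-cell of $X$, so its attaching map is a single-wrap $w$-cycle in $\Gamma$; if two $2$-cells of $Y$ shared a cycle, they would contribute twice as many corners to the link of some vertex of $Y$ as the link of the image vertex in $X$ admits, contradicting the local injectivity of the immersion. Second, two distinct single-wrap $w$-cycles in $\Gamma$ lie in distinct equivalence classes: if $c_1,c_2$ are single-wrap $w$-cycles based at $v_1,v_2$ and $p\colon v_1\to v_2$ is labeled $w^m$, then the existence of the $w$-loop $c_1$ at $v_1$ combined with the deterministic labeling forces reading $w$ from $v_1$ to return to $v_1$; iterating, the $w^m$-path from $v_1$ is a loop at $v_1$, so $v_2=v_1$, and determinism then yields $c_2=c_1$. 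Combined with Theorem~\ref{thm:word main} these yield $n\le \overline\#_w(\Gamma)\le \beta_1(\Gamma)$, so $\chi(Y)=1-\beta_1(\Gamma)+n\le 1$.

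If $\chi(Y)\le 0$ we are done, so it remains to show $Y$ is contractible in the extremal case $\chi(Y)=1$, equivalently $n=\overline\#_w(\Gamma)=\beta_1(\Gamma)$. The strategy is to extract from the proof of Theorem~\ref{thm:word main} that the homology classes of the chosen cycle representatives form a $\mathbb{Z}$-basis of $H_1(\Gamma)$; in the extremal case the attaching cycles $c_1,\dots,c_n$ of the $2$-cells of $Y$ will then constitute such a basis. A rank-counting argument for free groups, via the Euler-characteristic formula for subgroups, shows that any $n$-element subset of $\pi_1(\Gamma)=F_n$ whose abelianizations generate $\mathbb{Z}^n$ must itself generate $F_n$; applied to the $c_i$ this gives $\pi_1(Y)=1$, and combined with $\chi(Y)=1$ it forces $H_2(Y;\mathbb{Z})=0$, so $Y$ is a simply connected finite $2$-complex with vanishing $H_2$, hence contractible by Hurewicz and Whitehead.

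The main obstacle I foresee is this last step: it requires extracting from the proof of Theorem~\ref{thm:word main} not merely the inequality but also that the witnessing cycles span a $\mathbb{Z}$-summand of $H_1(\Gamma)$. With only rational linear independence one concludes $H_1(Y;\mathbb{Q})=0$, which does not on its own rule out torsion in $\pi_1(Y)$; finishing the extremal case might then require a separate collapsing or tower argument making essential use of the hypothesis that $w$ is not a proper power.
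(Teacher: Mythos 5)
Your reduction --- showing $\#(\text{$2$-cells of }Y)\le \overline\#_w(Y^1)\le\beta_1(Y^1)$ and hence $\chi(Y)\le 1$ --- is correct and follows the same counting route the paper uses to deduce the collapse in Corollary~\ref{cor:bi-slim nonpositive immersions}. Your two preparatory observations (distinct $2$-cells of $Y$ have distinct attaching cycles because immersions are injective on links, and distinct single-wrap $w$-cycles are inequivalent by determinism) are both sound.

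The gap is in the extremal case $\chi(Y)=1$, and it is more serious than the $\mathbb{Z}$-versus-$\mathbb{Q}$ issue you flag. The claim that $n$ elements of $F_n$ whose abelianizations form a $\mathbb{Z}$-basis of $\mathbb{Z}^n$ must generate $F_n$ is false. In $F_2=\langle a,b\rangle$, the elements $a$ and $a^{-1}b^{-1}ab^{2}$ abelianize to $(1,0)$ and $(0,1)$, yet a Stallings fold of $\langle a,\,a^{-1}b^{-1}ab^{2}\rangle$ yields a core graph with $3$ vertices and $4$ edges, so this is a proper rank-$2$ subgroup of infinite index. The Euler-characteristic formula you invoke only applies once finite index is known, and surjectivity on abelianizations does not supply that. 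Retreating to the weaker consequence $H_1(Y)=0$ does not help either: there are balanced presentations of nontrivial perfect (even superperfect) groups, so $H_1(Y)=0$ does not force $\pi_1 Y=1$. What is actually needed --- and what the paper proves --- is the collapsing addendum of Corollary~\ref{cor:bi-slim collapse} (and Theorem~\ref{thm:main}): when $\overline\#_w(\Gamma)=\beta_1(\Gamma)$, the complex $\Gamma^w$ \emph{collapses to a tree}. Collapsibility gives contractibility of $Y=\Gamma^w$ directly, avoiding any passage through $\pi_1$ and $H_2$. That conclusion is obtained via the isle/widge machinery (Lemma~\ref{lem: unique isle collapses}), not from the numerical inequality of Theorem~\ref{thm:word main}, and it cannot be recovered from that inequality by homological bookkeeping alone.
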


Theorem~\ref{thm:word main} is a simplified statement of results
simultaneously counting multiple types of immersed cycles in
Theorems~\ref{thm:bislim}~and~\ref{thm:main}.  These apply to the
class of ``bi-slim'' $2$-complexes which include staggered $2$-complexes,
and to the more general class of ``slim'' $2$-complexes (see
Definition~\ref{defn:slim}). In each case, an additional collapsing
conclusion is obtained, showing that these $2$-complexes have
nonpositive immersions.
In Section~\ref{sec:torsion}
we describe a way to treat the nonpositive immersion property for $2$-complexes whose $\pi_1$ is not torsion-free.
In Section~\ref{sec:reduced} we show that adding a $1$-cell and
$2$-cell to a slim $2$-complex usually results in another slim
$2$-complex. We deduce that Howie's reducible $2$-complexes are slim when
they have torsion-free $\pi_1$. Section~\ref{sec:shnc} contains an algebraic reformulation and consequence of Theorem~\ref{thm:word main}.
\section{Definitions}
\subsection{Preorder}
A \emph{preorder} on a set $E$ is a reflexive, transitive relation on
$E$, denoted by $\preceq$. As usual, $a\prec b$ means that
$(a\preceq b)\wedge\neg(b\preceq a)$. An element $a\in S$ is
\emph{minimal} in a subset $S\subseteq E$ if there is no $s\in S$ such
that $s\prec a$. The element $a$ is \emph{strictly maximal} in $S$ if
there is no $s\in S-\{a\}$ such that $a\preceq s$. The reader should
keep in mind the special case of a total ordering.

\subsection{Slim and bi-slim $2$-complexes}
\begin{defn}[Slim and bi-slim]\label{defn:slim}
  A combinatorial $2$-complex $X$ is \emph{slim} if:
  \begin{enumerate}
  \item\label{slim:order} There is a $\pi_1X$-invariant preorder on
    $\edges(\widetilde X)$.
  \item\label{slim:trav once} $\edges(\boundary \widetilde R)$ has a
    unique\begin{com}
      We needn't necessarily unique; distinguished may suffice. This
      may lead to interesting generalizations.
    \end{com} strictly maximal edge $e_{\widetilde R}^+$ for each $2$-cell $\widetilde R$ of
    $\widetilde X$.\\ Moreover, $e_{\widetilde R}^+$ is traversed exactly once by
    the boundary path $\boundary_\p R$.
  \item\label{slim:distinct} If $\widetilde R_1$ and $\widetilde R_2$ are distinct $2$-cells
    in $\widetilde X$ and $e_{\widetilde R_1}^+$ lies in $\boundary \widetilde R_2$ then
    $e_{\widetilde R_1}^+\prec e_{\widetilde R_2}^+$.
  \end{enumerate}
\end{defn}
$X$ is \emph{bi-slim} if it additionally satisfies:
\begin{enumerate}
  \setcounter{enumi}3
\item\label{bislim:bislim}
  for each $2$-cell $\widetilde R$ in $\widetilde X$, there is a
  distinguished edge $e_{\widetilde R}^-$
  such that for  distinct $2$-cells $\widetilde R_1$ and $\widetilde R_2$, if $\boundary_\p \widetilde R_1$ traverses
  $e_{\widetilde R_2}^-$, then $e_{\widetilde R_1}^+\prec e_{\widetilde R_2}^+$.
\end{enumerate}

\begin{exmp}
  Let $\varphi:J\to J$ be a $\pi_1$-injective map from a graph to
  itself. We show that the mapping torus $X$ of $\varphi$ is slim. The
  attaching map of each $2$-cell in $X$ is of the form
  $t_ua^{-1}t_v^{-1}\phi(a)$, where $a$ is a ``vertical'' edge arising
  from $J$ and each $t_p$ is a ``horizontal'' edge arising from a
  vertex $p$ of $J$. Let $\rho:\widetilde X\to\reals$ be the map
  associated to the homomorphism $\pi_1X\to\integers$ induced by
  $\pi_1J\mapsto0$ and $t_p\mapsto 1$. For edges $a,b$ of
  $\widetilde X$, we declare $a\preceq b$ if $\rho(a')\le\rho(b')$,
  where $a',b'$ are barycenters of $a,b$.
\end{exmp}

\begin{defn}\label{defn:staggered}
  A $2$-complex $X$ is \emph{staggered} 
   if its $2$-cells are totally ordered and a subset of its edges are totally ordered in such a way
  that
  \begin{enumerate}
  \item $\boundary_\p R$ is immersed and traverses at least one
    ordered edge for each $2$-cell $R$.
  \item If $R_1<R_2$ then $\min(R_1)<\min(R_2)$
    and $\max(R_1)<\max(R_2)$, where $\min(R)$ and $\max(R)$ denote
    the minimal and maximal edges in $\boundary R$.
      \end{enumerate}
  In particular, the standard $2$-complex $X$ of the presentation
  $\langle a,b,\ldots \mid W \rangle$ of a one-relator group without
  torsion is staggered and hence bi-slim by the following:
  \end{defn}


\begin{prop}\label{prop:staggered is bislim}
  Suppose $X$ is staggered and simple in the sense that no $2$-cell is
  attached along a proper power. Then $X$ is bi-slim.
\end{prop}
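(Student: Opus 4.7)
The plan is to construct a $\pi_1 X$-invariant preorder on $\edges(\widetilde X)$ by combining the pullback of the given orderings on $X$ with a refinement by a left-invariant total order $\leq_G$ on $G := \pi_1 X$. The pullback alone is too coarse: it identifies all $G$-translates of a given edge, so no strictly maximal edge exists in $\partial\widetilde R$ when $\max(R)$ occurs more than once in the attaching word (as already happens for the Klein bottle relator $abab^{-1}$). The refinement by $\leq_G$ distinguishes lifts within a single $G$-orbit. A left-invariant total order on $G$ exists here because $X$ simple and staggered forces $G$ to be locally indicable (by Howie's generalization of Brodskii's theorem), and locally indicable groups are left-orderable by Burns--Hale.

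To construct the preorder, fix a base lift $\widetilde e^*$ of each ordered edge $e$ of $X$, giving a $G$-equivariant identification of the fiber $\pi^{-1}(e)$ with $G$ via $g\mapsto g\cdot\widetilde e^*$. Declare $\widetilde e_1\preceq\widetilde e_2$ when either $\pi(\widetilde e_1)$ is unordered while $\pi(\widetilde e_2)$ is ordered, or both projections are ordered with $\pi(\widetilde e_1)<_X\pi(\widetilde e_2)$, or the projections coincide as ordered edges with $g_1\leq_G g_2$, or $\widetilde e_1=\widetilde e_2$. Reflexivity, transitivity, and $\pi_1 X$-invariance follow immediately from left-invariance of $\leq_G$. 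For each 2-cell $\widetilde R$ of $\widetilde X$ covering $R$ in $X$, take $e_{\widetilde R}^+$ to be the unique lift of $\max(R)$ in $\partial\widetilde R$ whose $G$-coordinate is $\leq_G$-maximal, and take $e_{\widetilde R}^-$ to be the $\leq_G$-minimal lift of $\min(R)$ in $\partial\widetilde R$.

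To verify the bi-slim conditions: Condition~(2) holds because every edge of $\partial\widetilde R$ other than $e_{\widetilde R}^+$ either projects to a strictly smaller ordered edge of $X$ or is a different lift of $\max(R)$ with strictly smaller $G$-coordinate, where pairwise distinctness of the lifts of $\max(R)$ in $\partial\widetilde R$ uses that $W$ is not a proper power together with torsion-freeness of $G$. For Condition~(3), when $R_1\ne R_2$ in $X$ the conclusion follows from the staggered axiom together with injectivity of $\max$ on 2-cells; in the same-cell subcase $\widetilde R_2=s\widetilde R_1$ with $s\ne 1$, the hypothesis $e_{\widetilde R_1}^+\in\partial\widetilde R_2$ exhibits $s^{-1}e_{\widetilde R_1}^+$ as another lift of $\max(R)$ inside $\partial\widetilde R_1$, so by maximality $g_*>_G s^{-1}g_*$, and left-translating by $s$ yields $sg_*>_G g_*$, i.e., $e_{\widetilde R_1}^+\prec e_{\widetilde R_2}^+$. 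Condition~(4) is the main technical obstacle: in the same-cell subcase the hypothesis $e_{\widetilde R_2}^-\in\partial\widetilde R_1$ forces $s=h_j h_\circ^{-1}$ with $h_\circ<_G h_j$ (by minimality of $h_\circ$), and one must argue, using left-invariance of $\leq_G$ together with the compatibility between the staggering structure and the chosen left-order on $G$, that this implies $sg_*>_G g_*$ and hence $e_{\widetilde R_1}^+\prec e_{\widetilde R_2}^+$.
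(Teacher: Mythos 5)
Your overall strategy matches the paper's: you use Howie + Burns--Hale to pick a left-invariant total order $\le_G$ on $\pi_1X$, and you build the preorder on $\edges(\widetilde X)$ by comparing projections to $X$ first and then breaking ties within a $G$-orbit via $\le_G$. The verifications of slim conditions~(1) and (3) are essentially the paper's. But there are two genuine gaps.

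First, for condition~(2) you assert that the pairwise distinctness of the lifts of $\max(R)$ appearing on $\partial\widetilde R$ follows from ``not a proper power together with torsion-freeness of $G$.'' That is not a valid deduction; distinctness requires that no proper subpath of $\partial_\p R$ between two occurrences of $\max(R)$ is nullhomotopic, which is exactly the content of the Weinbaum Subword Theorem (and its staggered generalization, cf. the paper's Corollary~\ref{cor:tight}). This is a substantive theorem about staggered/one-relator complexes, not a formal consequence of torsion-freeness, and the paper explicitly invokes it at this step.

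Second, and more seriously, your choice of $e_{\widetilde R}^-$ is wrong. You take the $\le_G$-\emph{minimal} lift of $\min(R)$ in $\partial\widetilde R$; call its coefficient $h_\circ$ among the coefficients $\{h_k\}$ of the lifts of $\min(R)$. In the same-cell case of condition~(4), writing $\widetilde R_i = g_i\widetilde R_0$, the hypothesis $e^-_{\widetilde R_2}\in\partial\widetilde R_1$ gives $g_2 h_\circ = g_1 h_j$ for some $j$, and the needed conclusion $g_1 <_G g_2$ is, by left-invariance (left-multiply by $(g_2h_\circ)^{-1}=(g_1h_j)^{-1}$), equivalent to $h_j^{-1} <_G h_\circ^{-1}$. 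But $\le_G$ is only left-invariant, so $h_\circ$ being $\le_G$-minimal among the $h_k$ does \emph{not} imply $h_\circ^{-1}$ is $\le_G$-maximal among the $h_k^{-1}$; inversion is order-reversing only for bi-invariant orders. Your closing appeal to a ``compatibility between the staggering structure and the chosen left-order'' has no content here and the step fails. The paper avoids this precisely by choosing $e^-_{\widetilde R}$ to be the lift $h_s\tilde b$ for which $h_s^{-1}$ is $\le_G$-\emph{maximal} (the ``source'' of the auxiliary tournament $K$ on the $\{h_k\tilde b\}$), which is exactly what makes the left-cancellation computation $g_1 = (g_1h_j)h_j^{-1} <_G (g_2h_s)h_s^{-1} = g_2$ go through. (This is the Duncan--Howie ordering, as the referee observes.) With your choice of $e^-$, the argument cannot be repaired without changing the definition.
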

\begin{proof}
Since $\pi_1X$ is locally indicable \cite{Howie82}, it has a left-ordering $<$ by \cite{BurnsHale72}.

For each ordered $1$-cell $c$ of $X$, a
$\pi_1X$-invariant ordering of the cells of $\widetilde X$ mapping to
$c$ is induced by choosing a distinguished lift $\tilde c$, and declaring
$g_1\tilde c \prec g_2\tilde c$ when $g_1<g_2$.
If $g_1\widetilde c$ and $g_2\widetilde d$ are ordered cells that are not in the same orbit, then we declare
$g_1\widetilde c \prec g_2\widetilde d$ if their projections satisfy $c<d$.
We have thus produced a  $\pi_1X$-invariant total ordering on the edges of $\widetilde X$ that
 project to ordered edges of $X$, and so Property~\eqref{slim:order} holds,
 and $e_{\widetilde R}^+$ exists for each $\widetilde R$.
  To see that Property~\eqref{slim:trav once} holds, observe that
  $ \boundary_\p \widetilde R$ embeds in $\widetilde X$ for each $2$-cell $\widetilde R$. This is a well-known generalization of Weinbaum's Subword Theorem \cite{Weinbaum72} (c.f. Corollary~\ref{cor:tight}).
Property~\eqref{slim:distinct} obviously holds when $\widetilde R_1,\widetilde R_2$ project to different $2$-cells in $X$ since then $\max(R_1)<\max(R_2)$.
Property~\eqref{slim:distinct} holds when $\widetilde R_2=g\widetilde R_1$ with $g\neq 1$,
since $e^+_{\widetilde R_2}=ge^+_{\widetilde R_1} \neq e^+_{\widetilde R_2}$ and so $e^+_{\widetilde R_1}\prec e^+_{\widetilde R_2}$ by definition of $e^+_{\widetilde R_2}$.

Let $R$ denote a $2$-cell of $X$ with $c=\max(R)$.
Let $\tilde c$ denote the distinguished lift of $c$ declared above.
Let $\widetilde R$ be the lift of $R$ satisfying $e^+_{\widetilde R}=\tilde c$.
Let $\tilde b$ denote the distinguished lift of $b=\min(R)$.
 Let $\{h_k\tilde b\}$ be the translates of $\tilde b$ to edges of $\boundary \widetilde R$.
Consider the complete digraph $K$ with vertices $\{h_k\tilde b\}$ and with an edge directed from $h_i\tilde b$ to
$h_j\tilde b$ if $h_j^{-1} < h_i^{-1}$.
 The ordering of $\pi_1X$ ensures that $K$ is acyclic. We
declare $e^-_{\widetilde R}$ to be the source $h_s\tilde b$ of $K$.
For $g\in\pi_1X$ we declare $e_{g\widetilde R}^-=ge_{\widetilde R}^-$.

We now verify that Property~\eqref{bislim:bislim} holds.
If $\widetilde R_1,\widetilde R_2$ project to different cells in $X$,
then $\min(R_1)<\min(R_2)$ and so $R_1<R_2$ and so $\max(R_1)<\max(R_2)$ and hence $e^+_{\widetilde R_1}< e^+_{\widetilde R_2}$.
If $\widetilde R_1,\widetilde R_2$ project to the same cell $R$,
then $e^+_{\widetilde R_1}=g_1\tilde c$ and $e^+_{\widetilde R_2}=g_2\tilde c$ for some $g_1,g_2\in\pi_1X$ where $\tilde c$ is the distinguished lift of $c=\max(R)$.
Note that $e^-_{\widetilde R_2} = g_2e^-_{\widetilde R} = g_2h_s {\tilde b}$.
 Since $e^-_{\widetilde R_2}$ lies in $\boundary  \widetilde R_1$
we also have $e^-_{\widetilde R_2} = g_1 h_j{\tilde b}$ with $h_j^{-1}<h_s^{-1}$.
Comparing coefficients we have $(g_1h_j) = (g_2h_s)$.
We conclude that $e^+_{\widetilde R_1} < e^+_{\widetilde R_2}$
since $g_1 = (g_1h_j)h_j^{-1}<(g_2h_s)h_s^{-1} = g_2$.
\end{proof}

The referee observed that the ordering arising from the complete
digraph $K$ corresponds to the Duncan-Howie ordering for a one-relator
group \cite{DuncanHowie91}.

\subsection{$w$-cycles}
\begin{defn}[$w$-cycle]\label{defn:w-cycles}
  Let $X$ be a $2$-complex and $\{R_1,R_2,\ldots\}$ be its $2$-cells. For
  each $i$, let $w_i\to X^1$ be the immersed combinatorial circle
  corresponding to $\boundary_\p R_i$.
  Let $\Gamma\to X^1$ be an immersion of a connected nonempty graph. A
  \emph{$w_i$-cycle} in $\Gamma$ is a lift of $\widetilde w_i$ to
  $\Gamma$. Two such lifts are \emph{equivalent} if they differ by an
  element of $\Aut(\widetilde w_i)$.

  $\overline\#_{w_i}(\Gamma)$ is the number of equivalence classes of
  $w_i$-cycles in $\Gamma$ and
  $\overline\#_w(\Gamma)=\sum_i\overline\#_{w_i}(\Gamma)$.
  Likewise, $\#_{w_i}(\Gamma)$ is the number of $w_i$-cycles and
  $\#_w(\Gamma)=\sum_i\#_{w_i}(\Gamma)$.
  Note that the use of the symbols $\#_w$ and $\overline\#_w$ is
  interchanged with respect to their use in \cite{WiseHNCoherence}.
\end{defn}
\begin{com}
We could have dispensed with $\widetilde \Gamma$ and done
  everything in $(\widetilde X)^1$.
\end{com}
\subsection{Pre-widges}
We henceforth assume that $X$ is slim.
The preorder on the $\edges(\widetilde{X})$ induces a
$\pi_1X^1$-invariant preorder on the $\edges(\widetilde{X^1})$ via
the map $\widetilde{X^1}\to\widetilde X^1$.

Fixing basepoints of $X^1$ and $\Gamma$, we regard the universal cover
$\widetilde\Gamma$ as a subtree of $\widetilde{X^1}$, and we restrict
the above preorder to a $\pi_1\Gamma$-invariant preorder on the
$\edges({\widetilde\Gamma})$.

A \emph{$w_i$-line} is the image of a lift
$\widetilde w_i\hookrightarrow\widetilde\Gamma$ of a $w_i$-cycle in
$\Gamma$.  We use the term \emph{$w$-line} to indicate a $w_i$-line
for some $i$.

Let $\ell$ be a $w$-line. A \emph{pre-widge of $\ell$} is an edge that
is strictly maximal in $\edges(\ell)$. A \emph{pre-widge} is an edge
of $\widetilde{\Gamma}$ which is a pre-widge of some $w$-line.

As $\pi_1\Gamma$ permutes the $w_i$-lines (for each $i$) and preserves
the ordering, we see that $\pi_1\Gamma$ permutes the pre-widges. For
each $w$-line $\ell$, the pre-widges of $\ell$ lie in a single
$\stab_{\pi_1X^1}(\ell)$-orbit by
Definition~\ref{defn:slim}.\eqref{slim:trav once}. No edge is a
pre-widge of two different $w$-lines by
Definition~\ref{defn:slim}.\eqref{slim:distinct}.

\section{Widges and isles}

The image in $\Gamma$ of a pre-widge is a \emph{widge}.  Let
$\mathcal{W}(\Gamma)\subset \edges(\Gamma)$ denote the set of widges
in $\Gamma$. Removing the open edges $\mathcal{W}(\Gamma)$ from
$\Gamma$, we obtain a set $\mathcal{I}(\Gamma)$ of components called
\emph{isles}. The significance of the isle-widge decomposition lies in:

\begin{lem}\label{lem:no tree isle}
  Let $X$ be slim.  Let $\Gamma\to X^1$ be an immersion of a finite
  nonempty graph.  Then $\#_w(\Gamma)<\beta_1(\Gamma)+T$, where $T$ is
  the number of isles that are trees.
\end{lem}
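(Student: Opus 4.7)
My plan is to combine two sub-estimates: an Euler characteristic calculation yielding $W \leq \beta_1(\Gamma) + T - 1$, where $W := |\mathcal{W}(\Gamma)|$, and an orbit-counting argument yielding $\#_w(\Gamma) = W$. For the first step, assume $\Gamma$ is connected (the disconnected case follows by summing over components). The isle-widge decomposition gives $\edges(\Gamma) = \mathcal{W}(\Gamma) \sqcup \bigsqcup_{j=1}^{\nu} \edges(I_j)$ and $\vertices(\Gamma) = \bigsqcup_{j=1}^{\nu} \vertices(I_j)$, where $\nu$ is the number of isles. Substituting into $\beta_1(\Gamma) = |\edges(\Gamma)| - |\vertices(\Gamma)| + 1$ and using $\beta_1(I_j) = |\edges(I_j)| - |\vertices(I_j)| + 1$ (each isle is connected) gives
\[
\beta_1(\Gamma) = W + \sum_{j=1}^{\nu} \beta_1(I_j) - \nu + 1.
\]
Since non-tree isles contribute $\beta_1 \geq 1$, the sum satisfies $\sum_j \beta_1(I_j) \geq \nu - T$, hence $W \leq \beta_1(\Gamma) + T - 1$.

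For the orbit-counting step, I plan to show $\#_w(\Gamma) = W$ by decomposing both counts according to the immersed $w_i$-circles $C \subseteq \Gamma$. Fix such a $C$ of length $n_C |w_i|$ and lift to a $w_i$-line $\ell \subseteq \widetilde{\Gamma}$. Since $\ell$ is the universal cover of the circle $w_i \to X^1$, the stabilizer $\stab_{\pi_1 X^1}(\ell)$ is infinite cyclic, generated by a translation $\tau$ through one period of $w_i$, while $\stab_{\pi_1 \Gamma}(\ell) = \langle \tau^{n_C} \rangle$ by standard covering theory. By Definition~\ref{defn:slim}.\eqref{slim:trav once}, the pre-widges of $\ell$ form a single $\langle \tau \rangle$-orbit; $\tau$ acts freely on $\ell$ (a non-trivial translation on a tree fixes no edges), so this orbit is infinite. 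The $\pi_1 \Gamma$-orbit of $\ell$ therefore contributes exactly $[\langle \tau \rangle : \langle \tau^{n_C} \rangle] = n_C$ widges to $\mathcal{W}(\Gamma)$. On the other hand, Definition~\ref{defn:slim}.\eqref{slim:trav once} forces $w_i$ to not be a proper power (a proper power's would-be maximal edge would be traversed more than once in $\boundary_\p R_i$), so precisely $n_C$ vertices of $C$ are start-positions for reading $w_i$, giving $n_C$ distinct $w_i$-cycles supported on $C$. Definition~\ref{defn:slim}.\eqref{slim:distinct} ensures distinct $w$-lines have disjoint pre-widges, so widges from distinct circles are disjoint. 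Summing $n_C$ over all circles yields $\#_w(\Gamma) = W$.

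Combining the two steps gives $\#_w(\Gamma) = W \leq \beta_1(\Gamma) + T - 1 < \beta_1(\Gamma) + T$. The main obstacle is the orbit-counting step, specifically pinning down $\stab_{\pi_1 X^1}(\ell) \cong \integers$ as a group of period-translations and carrying out the double bookkeeping that matches the widge count with the $w_i$-cycle count on each immersed circle. The Euler characteristic calculation is routine once the isle-widge decomposition is in hand.
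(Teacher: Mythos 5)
Your proposal follows the same overall strategy as the paper: express $\chi(\Gamma)$ in terms of the isle--widge decomposition to obtain $|\mathcal W(\Gamma)|\le\beta_1(\Gamma)+T-1$, then identify $\#_w(\Gamma)$ with $|\mathcal W(\Gamma)|$. The first (Euler-characteristic) step is essentially identical to the paper's; the difference is in the second step, which the authors dismiss with the single sentence ``$\#_w(\Gamma)=|\mathcal W(\Gamma)|$ by the definition of widge,'' whereas you spell out the orbit-counting bookkeeping. Your expansion is essentially correct: for each $w_i$-line $\ell$, the pre-widges form one $\stab_{\pi_1X^1}(\ell)$-orbit, no edge is a pre-widge of two lines, and passing to $\Gamma$ turns this orbit into $[\stab_{\pi_1X^1}(\ell):\stab_{\pi_1\Gamma}(\ell)]$ widges, which matches the orbit-size count of lifts $\widetilde w_i\to\Gamma$ under $\Aut(\widetilde w_i)$. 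Two minor caveats: first, the cleanest parametrization is by $\pi_1\Gamma$-orbits of $w_i$-lines rather than by ``immersed $w_i$-circles $C\subseteq\Gamma$'' (the latter need not be embedded and a single image subgraph can underlie several inequivalent cycles, so the phrase needs care); second, your parenthetical justification that condition~\eqref{slim:trav once} forbids proper powers is a bit quick --- the real reason is that if $w_i=v^p$ with $p\ge2$, a translate $g\widetilde R$ of $\widetilde R$ by $g=[v]$ has the same boundary, and any strictly maximal edge would have to be $g$-fixed, which is impossible (or, if $g=1$, the boundary revisits edges) --- but the conclusion is correct and is also what the argument needs. Overall: correct, same route, with welcome extra detail on a step the paper leaves implicit.
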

\begin{proof}
  We first express the Euler characteristic of $\Gamma$ in terms of the
  decomposition:
  \[\chi(\Gamma)= \sum_{I\in \mathcal{I}(\Gamma)}\chi(I)\;-\;
  |\mathcal{W}(\Gamma)| \]
  hence
  \[|\mathcal{W}(\Gamma)|=-\chi(\Gamma)+\sum_{I\in
    \mathcal{I}(\Gamma)}\chi(I)= \beta_1(\Gamma) -1 +\sum_{I\in
    \mathcal{I}(\Gamma)}\chi(I)\]

  Since the last term is less than or equal to $T$, we have
  $|\mathcal{W}(\Gamma)|<\beta_1(\Gamma)+T$.

  Finally, $\#_w(\Gamma)=|\mathcal{W}(\Gamma)|$ by the definition of
  widge.
\end{proof}

\section{Counting $w$-cycles with multiplicity in the bi-slim
  case}\label{sec:bislim}
\begin{thm}\label{thm:bislim}
  Let $X$ be bi-slim.  Let $\Gamma\rightarrow X^1$ be an immersion of
  a finite nonempty graph.  Suppose each edge of $\Gamma$ is
  traversed by at least two $w$-cycles or traversed at least twice by
  some $w$-cycle. Then either $\Gamma$ is a single vertex or
  \begin{equation}\label{eq:main bi-slim}
    \#_w(\Gamma)<\beta_1(\Gamma).
  \end{equation}
\end{thm}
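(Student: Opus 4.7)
The plan is to invoke Lemma~\ref{lem:no tree isle}, which yields $\#_w(\Gamma)<\beta_1(\Gamma)+T$ where $T$ is the number of isles that are trees; hence it suffices to establish $T=0$ under the hypothesis (the single-vertex case being vacuous). Suppose for contradiction that some isle $I$ is a tree, and let $\widetilde I\subset\widetilde\Gamma$ be a lift. Since $I$ is simply connected, $\widetilde I\to I$ is a homeomorphism and $\widetilde I$ is a finite embedded subtree of $\widetilde\Gamma$. The edge-traversal hypothesis lifts to the statement that each edge of $\widetilde\Gamma$ lies on at least two distinct $w$-lines.

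Consider the finite set $\mathcal W_I$ of pre-widges in $\widetilde\Gamma$ adjacent to $\widetilde I$, and choose $\widetilde W_0\in\mathcal W_I$ minimal in the preorder. By the uniqueness in slim property~\eqref{slim:distinct}, $\widetilde W_0=e^+_{\ell_0}$ for a unique $w$-line $\ell_0$. The first key step is to show that every widge at which $\ell_0$ enters or exits $\widetilde I$ is itself a pre-widge of $\ell_0$: if such a widge $\widetilde W$ were the pre-widge $e^+_{\ell'}$ of some $\ell'\neq\ell_0$, then $\widetilde W$ lies on $\ell_0$ but is not the strictly maximal edge of $\ell_0$, so $\widetilde W\prec e^+_{\ell_0}=\widetilde W_0$, while $\widetilde W\in\mathcal W_I$, contradicting minimality.

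Consequently each visit of $\ell_0$ to $\widetilde I$ is delimited by two consecutive pre-widges of $\ell_0$, since any intermediate pre-widge would lie in $\widetilde I$, yet widges project to widges in $\Gamma$ and so lie in no isle. Hence each visit traces exactly one period of $\boundary R_{\ell_0}$, and within this period the pre-bowl $b_{\ell_0}$ (strictly less than $e^+_{\ell_0}$ in the preorder by property~\eqref{slim:trav once}, hence distinct from it) lies strictly between the two bounding pre-widges, so a $\stab(\ell_0)$-translate of $b_{\ell_0}$ lies in $\widetilde I$. The edge-hypothesis then supplies another $w$-line $\ell^*\neq\ell_0$ through this translated pre-bowl, and the bi-slim condition applied to the distinct $2$-cells of $\ell_0$ and $\ell^*$ yields an inequality of the form $e^+_{\ell^*}\prec g\widetilde W_0$ for an appropriate translate. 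A symmetric analysis of how $\ell^*$ crosses $\widetilde I$ then produces a pre-widge of $\ell^*$ lying in $\mathcal W_I$ that is strictly less than $\widetilde W_0$, contradicting minimality.

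The main obstacle is the final transfer step: converting the bi-slim inequality $e^+_{\ell^*}\prec g\widetilde W_0$ (which concerns a $\pi_1X$-translate) into a contradictory comparison within the finite set $\mathcal W_I$. This requires careful tracking of $\pi_1$-translates together with the $\pi_1X$-invariance of the preorder and the fact that $\stab_{\pi_1\Gamma}(\widetilde I)=\pi_1 I$ is trivial since $I$ is a tree.
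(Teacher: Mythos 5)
Your outline follows the same strategy as the paper's proof: reduce via Lemma~\ref{lem:no tree isle} to showing no isle is a tree, pick a minimal pre-widge adjacent to $\widetilde I$, observe that the bounding edges $e_1,e_2$ of $\ell\cap\widetilde I$ must be pre-widges of the chosen line $\ell$ itself, locate a copy of $e^-_{\widetilde R}$ in the interior of $\widetilde I$, find a second $w$-line through it using the double-traversal hypothesis, and apply property~\eqref{bislim:bislim}. However, two of the steps are not right as stated.

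First, you assert that $e^-_{\widetilde R}$ (your ``pre-bowl'' $b_{\ell_0}$) is distinct from $e^+_{\widetilde R}$, citing property~\eqref{slim:trav once}. That property only says $e^+_{\widetilde R}$ is the unique strictly maximal edge and is traversed once; nothing in Definition~\ref{defn:slim} or the bi-slim condition~\eqref{bislim:bislim} prevents $e^-_{\widetilde R}=e^+_{\widetilde R}$. The paper instead \emph{derives} this in the relevant situation: it shows that neither $e_1$ nor $e_2$ can map to $e^-_{\widetilde R}$, because if one did, the double-traversal hypothesis would supply a second $w$-line through it, and~\eqref{bislim:bislim} applied to that second line's $2$-cell would produce a smaller pre-widge incident to $\widetilde I$, contradicting minimality. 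Only then does one know the copy of $e^-_{\widetilde R}$ lies in the \emph{interior} of $\ell\cap\widetilde I$. Your proof skips this intermediate step and hence has a genuine gap.

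Second, the ``main obstacle'' you flag at the end --- converting $e^+_{\ell^*}\prec g\widetilde W_0$ into $e^+_{\ell^*}\prec \widetilde W_0$ --- is not actually an obstacle, and the appeal to triviality of $\stab_{\pi_1\Gamma}(\widetilde I)$ is a red herring. The preorder on $\edges(\widetilde\Gamma)$ is pulled back along $\widetilde\Gamma\subset\widetilde{X^1}\to\widetilde X^1$, and \emph{all} pre-widges of a given $w$-line $\ell_0$ have the same image $e^+_{\widetilde R}$ in $\widetilde X^1$, so they are mutually $\preceq$-equivalent. Thus $e^+_{\ell^*}\prec$ (any pre-widge of $\ell_0$) already gives $e^+_{\ell^*}\prec\widetilde W_0$. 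You left this unresolved, so as written the proof is incomplete at precisely the step where the contradiction is supposed to land.
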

\begin{proof}
  By Lemma~\ref{lem:no tree isle}, the desired conclusion will follow by
  showing that no isle $I$ of $\Gamma$ is a tree. Suppose $I$ were a tree, and consider
  $\widetilde I \subset \widetilde{X^1}$.
  Choose $e$ to be a pre-widge of a $w$-line $\ell$
  intersecting $\widetilde I$, and assume that $e$ is minimal among all such choices.
  Observe that the edges $e_1,e_2$ in $\ell$ immediately before and
  after $\ell\cap \widetilde I$ are pre-widges of $\ell$.  Indeed,
  each $e_i$ is a pre-widge since it is incident to $\widetilde I$,
  and if $e_i$ were a pre-widge of another line, then $e_i\prec e$ by
  Definition~\ref{defn:slim}.\eqref{slim:distinct}, violating the
  minimality of $e$.
  Observe that neither $e_i$ maps to the distinguished edge $e_{\widetilde R}^-$ of
  the $2$-cell in $\widetilde X$ to whose boundary $\ell$ maps. Indeed,
  a second $w$-line $\ell'$ traversing $e_i$ would have a widge $q$
  satisfying $q\prec e$ by
  Definition~\ref{defn:slim}.\eqref{bislim:bislim}, contradicting the
  minimality of $\ell$.
  Thus the arc connecting $e_1,e_2$ contains an edge $e^-$ mapping to
  $e_{\widetilde R}^-$. Let $\ell'$ be another $w$-line that traverses $e^-$. Then
  $\ell'$ intersects $\widetilde I$ and $e' \prec e$ for any
  pre-widge $e'$ of $\ell'$ by
  Definition~\ref{defn:slim}.\eqref{bislim:bislim}. This contradicts
  the minimality of $e$.
  \end{proof}

\begin{defn}[Collapsing $\Gamma^w$]
  We form a $2$-complex $\Gamma^w$ from $\Gamma$ by adding a single
  $2$-cell for a representative of each $w$-cycle equivalence class in
  $\Gamma$.

  An edge $e$ in a $2$-complex is a \emph{free face} of a $2$-cell $f$ if
  $\boundary_\p f$ traverses $e$ exactly once and $e$ is not in the
  boundary of any other $2$-cell. In this case, we can \emph{collapse}
  to a subcomplex with the same homotopy type by removing the open
  cells $e,f$. If $B$ is obtained from $A$ by a sequence of such
  collapses then we say that $A$ \emph{collapses to} $B$.
\end{defn}

  \begin{cor}\label{cor:bi-slim collapse}
    Let $X$ be bi-slim.  Let $\Gamma\rightarrow X^1$ be an immersion
    of a finite nonempty graph. Then
    $\overline\#_w(\Gamma)\le\beta_1(\Gamma)$ with equality only if
    $\Gamma^w$ collapses to a tree.
  \end{cor}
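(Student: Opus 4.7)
The plan is to iteratively collapse $\Gamma^w$ as far as possible, then apply Theorem~\ref{thm:bislim} to the subgraph covered by the surviving $2$-cells' boundaries. Assume $\Gamma$ is connected (else handle components separately). Iteratively remove pairs $(e,R)$, where $e$ is a free face of $R$, until no free faces remain, yielding a subcomplex $(\Gamma^w)^\flat$ with $1$-skeleton $\Gamma'\subseteq\Gamma$ and some number $k$ of surviving $2$-cells. Each collapse preserves Euler characteristic and connectivity (the boundary edges of $R$ other than $e$ keep the endpoints of $e$ connected in $\Gamma$), so comparing
\[
\chi(\Gamma^w)=1-\beta_1(\Gamma)+\overline\#_w(\Gamma) \quad\text{and}\quad \chi((\Gamma^w)^\flat)=1-\beta_1(\Gamma')+k
\]
gives $\overline\#_w(\Gamma)-\beta_1(\Gamma)=k-\beta_1(\Gamma')$. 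The desired inequality thus reduces to showing $k\leq\beta_1(\Gamma')$, with equality only when $k=0$ and $\Gamma'$ is a tree, i.e., when $\Gamma^w$ collapses to a tree.

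For this reduction, let $\ell_1,\ldots,\ell_k$ be representatives of the surviving equivalence classes (the attaching maps of $R_1,\ldots,R_k$), and let $\Gamma'_0\subseteq\Gamma'$ denote the subgraph $\bigcup_j\partial R_j$. For any edge $e\in\Gamma'_0$, the fact that $e$ is not a free face of $(\Gamma^w)^\flat$ means that either some $\partial R_j$ traverses $e$ at least twice (so $\ell_j$ traverses $e$ at least twice) or $e$ lies in distinct $\partial R_j,\partial R_{j'}$ (so $\ell_j$ and $\ell_{j'}$ are distinct $w$-cycles through $e$). Since each $\ell_j$ lies in a single connected component of $\Gamma'_0$ and is a $w$-cycle there, the hypothesis of Theorem~\ref{thm:bislim} is verified on each connected component of $\Gamma'_0$.

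Applying Theorem~\ref{thm:bislim} to each component and summing (single-vertex components contribute nothing) yields $\#_w(\Gamma'_0)<\beta_1(\Gamma'_0)$ whenever $\Gamma'_0$ has an edge, i.e., whenever $k>0$. Since $\ell_1,\ldots,\ell_k$ are $k$ distinct $w$-cycles of $\Gamma'_0$, and subgraph inclusion induces an injection on $H_1$ for graphs (so $\beta_1(\Gamma'_0)\leq\beta_1(\Gamma')$), the chain
\[
k\leq\#_w(\Gamma'_0)<\beta_1(\Gamma'_0)\leq\beta_1(\Gamma')
\]
gives strict inequality when $k>0$; the bound is trivial when $k=0$. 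Thus $k\leq\beta_1(\Gamma')$, with equality only when $k=0=\beta_1(\Gamma')$, as required. The main technical point is checking the double-covering hypothesis on components of $\Gamma'_0$; this works cleanly because each $\ell_j$ is automatically a $w$-cycle of the component of $\Gamma'_0$ it inhabits, so the obstruction to being a free face translates directly into the hypothesis of Theorem~\ref{thm:bislim}.
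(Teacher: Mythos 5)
Your proof is correct and rests on the same essential idea as the paper's: strip away free faces (and, in your version, the ``isolated'' part $\Gamma'\setminus\Gamma'_0$) until the hypothesis of Theorem~\ref{thm:bislim} is met, and then invoke that theorem. The paper organizes this as a two-stage induction on edges (removing isolated edges, then one free face at a time), whereas you perform the full collapse in one pass, track the Euler characteristic, and apply Theorem~\ref{thm:bislim} componentwise to the core $\Gamma'_0 = \bigcup_j \partial R_j$; the difference is one of bookkeeping rather than substance.
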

  \begin{proof}
Suppose $\Gamma^w$ has an edge $e$ that is \emph{isolated} in the sense that $e$ is not in
the boundary of any $2$-cell. The statement holds for (each component
of) $\Gamma-e$ by induction on the number of such edges. When $e$ is
non-separating, the extra $\beta_1$ yields a strict inequality for
$\Gamma$. When $e$ is separating, either both components of
$\Gamma^w-e$ collapse to a tree, or we get a strict inequality for one
of the components of $\Gamma-e$ and hence for $\Gamma$.

We now assume that $\Gamma^w$ has no isolated edge. Suppose $\Gamma^w$
has a free face $e$. Observe that $e$ cannot be separating. Removing
$e$ decreases both $\overline\#_w$ and $\beta_1$ by 1. Hence, the
result holds for $\Gamma$ by induction on the number of edges. The
base-case of this induction holds by Theorem~\ref{thm:bislim},
since $\overline \#_w(\Gamma) \leq \#_w(\Gamma)$.
\end{proof}

\begin{proof}[Proof of Theorem~\ref{thm:word main}]
  A deterministically labeled digraph $\Gamma$, as described in the
  introduction, is equivalent to a combinatorial immersion of a graph
  into a bouquet of circles $X^1$. Attaching a $2$-cell to $X^1$ along a
  path corresponding to the cyclically reduced simple word $w$
  produces a bi-slim complex by Proposition~\ref{prop:staggered is bislim}.
  There is a bijection between $w$-cycles in the sense of the
  introduction and $w$-cycles in the sense of
  Definition~\ref{defn:w-cycles}, and this bijection respects the
  equivalence relations.
  Theorem~\ref{thm:word main} then follows immediately from
  Corollary~\ref{cor:bi-slim collapse}.
  Moreover, under the additional assumption that each edge of $\Gamma$
  lies in at least two $w$-cycles, Theorem~\ref{thm:bislim} gives
  the stronger inequality $\#_w(\Gamma)<\beta_1(\Gamma)$.
\end{proof}

\begin{cor}\label{cor:bi-slim nonpositive immersions}
  If $X$ is bi-slim, then $X$ has nonpositive immersions.
\end{cor}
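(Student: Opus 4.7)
The plan is to bound $\chi(Y)$ by combining Corollary~\ref{cor:bi-slim collapse} with an injectivity fact relating the $2$-cells of $Y$ to equivalence classes of $w$-cycles in the $1$-skeleton. First I would let $Y\to X$ be a combinatorial immersion with $Y$ compact and connected and set $\Gamma := Y^1$. Since $Y\to X$ is an immersion so is $\Gamma\to X^1$, and $\Gamma$ is connected. Each $2$-cell $r$ of $Y$ maps combinatorially onto some $2$-cell $R_i$ of $X$, so the attaching map $\partial_\p r$ is a closed based loop in $\Gamma$ with label $w_i$; that is, a single-lap $w_i$-cycle.

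The heart of the argument, which I expect to be the main obstacle, will be to show that the assignment from $2$-cells of $Y$ to equivalence classes of $w$-cycles in $\Gamma$ is injective. If distinct $2$-cells $r_1,r_2$ both mapped to $R_i$ with equivalent attaching $w_i$-cycles, then $\partial_\p r_1$ and $\partial_\p r_2$ would trace out the same loop in $\Gamma$, and at any vertex $v$ on this loop the corners of $r_1$ and $r_2$ at $v$ would share the same incoming and outgoing edges. These two corners in the link of $v$ in $Y$ would then both project to a single corner in the link of the image of $v$ in $X$, contradicting local injectivity of $Y\to X$. Writing $R_Y$ for the set of $2$-cells of $Y$, this yields $|R_Y|\le\overline{\#}_w(\Gamma)$.

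Combining this with Corollary~\ref{cor:bi-slim collapse} will give
\[
\chi(Y) \;=\; \chi(\Gamma) + |R_Y| \;\le\; (1-\beta_1(\Gamma)) + \overline{\#}_w(\Gamma) \;\le\; 1.
\]
If $\chi(Y)\le 0$ we are done; otherwise $\chi(Y)=1$, which forces $|R_Y|=\overline{\#}_w(\Gamma)=\beta_1(\Gamma)$. In this equality case every equivalence class of $w$-cycles in $\Gamma$ must be single-lap and must be realized by exactly one $2$-cell of $Y$, so the $2$-cells of $Y$ are in bijection with those of $\Gamma^w$ via their common indexing by equivalence classes. Paired attaching maps then differ only by a cyclic rotation of $S^1$, which extends to a self-homeomorphism of $D^2$, so $Y\cong\Gamma^w$ as CW-complexes. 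By Corollary~\ref{cor:bi-slim collapse}, $\Gamma^w$ collapses to a tree and is therefore contractible, and hence so is $Y$.
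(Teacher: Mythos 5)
Your argument is correct and follows essentially the same route as the paper's: both set $\Gamma=Y^1$, compare $\beta_1(\Gamma)$, the number $F$ of $2$-cells of $Y$, and $\overline\#_w(\Gamma)$ using Corollary~\ref{cor:bi-slim collapse} and $\euler(Y)=1-\beta_1(\Gamma)+F$, and handle the equality case by identifying $Y$ with $\Gamma^w$, which then collapses to a tree. The one place you supply extra detail is the link-injectivity argument showing that distinct $2$-cells of $Y$ yield inequivalent $w$-cycles (hence $F\le\overline\#_w(\Gamma)$); the paper leaves this step to the reader.
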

\begin{proof}
  Let $Y\rightarrow X$ be an immersion with $\euler(Y)>0$.  Let $\Gamma=Y^1$. Let $F$
  be the number of $2$-cells in $Y$.
The first inequality in~\eqref{eq:three in a row} holds by Corollary~\ref{cor:bi-slim collapse},
the second holds since $\euler(Y)=1-\beta_1(\Gamma)+F$,
and the third holds by definition of $F$ and $ \#_w$.
\begin{equation}\label{eq:three in a row}
   \#_w(\Gamma)\le  \beta_1(\Gamma)\le F\le  \#_w(\Gamma)
  \end{equation}
Thus each inequality in~\eqref{eq:three in a row} is an equality, and so $Y=\Gamma^w$ collapses by the second statement of Corollary~\ref{cor:bi-slim collapse}.
\end{proof}
\begin{rem}[Coherence of one-relator groups]\label{rem:coherence}
  The original motivation for counting $w$-cycles, and in particular
  for proving Corollary~\ref{cor:bi-slim nonpositive immersions}, is to
  affirmatively answer G.~Baumslag's question on the coherence of
  one-relator groups. There is currently a gap in the proof of the main goal of \cite{WiseNonPositiveCoherence02} which asserts: if $X$ has nonpositive immersions then $\pi_1X$ is \emph{coherent} in the sense
   that every finitely generated subgroup of $\pi_1X$ is finitely presented.
\end{rem}

\section{Counting $w$-cycles (without multiplicity) in the slim case}
%
As we now only count equivalence classes of $w$-cycles, we
focus on only one widge from each equivalence
class of $w$-cycles.
Accordingly,  two widges in $\Gamma$ are \emph{equivalent} if they are
images of pre-widges of a common $w$-line. We arbitrarily
select one widge from each equivalence class and call these
\emph{great widges} and refer to their preimages as \emph{great pre-widges}.
The \emph{great isles} are the components obtained by removing the great widges from $\Gamma$.

\begin{lem}\label{lem:no tree great isle}
  Let $X$ be slim. Let $\Gamma\to X^1$ be an immersion of a finite
  nonempty graph. Then $\overline\#_w(\Gamma)<\beta_1(\Gamma)+
  \overline T$, where $\overline T$ is the number of great isles that
  are trees.
\end{lem}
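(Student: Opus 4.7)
The plan is to mimic the Euler-characteristic decomposition used in the proof of Lemma~\ref{lem:no tree isle}, but now with great widges and great isles in place of widges and isles. The first task is to verify that the number of great widges in $\Gamma$ equals $\overline\#_w(\Gamma)$. By construction there is exactly one great widge per equivalence class of widges, so what needs checking is that equivalence classes of widges in $\Gamma$ are in bijection with equivalence classes of $w$-cycles in $\Gamma$.

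For this bijection I would argue as follows. Each pre-widge lies on a unique $w$-line by Definition~\ref{defn:slim}.\eqref{slim:distinct}, and the pre-widges of a fixed $w$-line $\ell$ form a single $\stab_{\pi_1X^1}(\ell)$-orbit by Definition~\ref{defn:slim}.\eqref{slim:trav once}; together these send an equivalence class of widges to a well-defined $\pi_1\Gamma$-orbit of $w$-lines in $\widetilde\Gamma$. Conversely, any two widges arising as images of pre-widges on $\pi_1\Gamma$-translates of a common line differ only by a $\pi_1\Gamma$-element, hence are equivalent. Finally, $\pi_1\Gamma$-orbits of $w_i$-lines in $\widetilde\Gamma$ are in bijection with equivalence classes of $w_i$-cycles in $\Gamma$: a $w_i$-line only remembers the lift $\widetilde w_i\hookrightarrow\widetilde\Gamma$ up to precomposition with $\Aut(\widetilde w_i)$, and projecting such a lift to $\Gamma$ and then quotienting by $\pi_1\Gamma$ is exactly the quotient of the set of $w_i$-cycles by the $\Aut(\widetilde w_i)$-action that defines their equivalence.

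Once this identification is in hand, the counting argument runs exactly as in Lemma~\ref{lem:no tree isle}. Removing the open great widges from $\Gamma$ yields the great isles, so
\[
\chi(\Gamma) \;=\; \sum_{I\in \text{great isles}} \chi(I) \;-\; \overline\#_w(\Gamma),
\]
and hence
\[
\overline\#_w(\Gamma) \;=\; \beta_1(\Gamma)-1 \;+\; \sum_{I} \chi(I).
\]
Each great isle $I$ is a finite connected graph, so $\chi(I)\le 1$ with equality iff $I$ is a tree, while non-tree isles contribute $\chi(I)\le 0$. Therefore $\sum_I \chi(I)\le \overline T$, giving $\overline\#_w(\Gamma)\le \beta_1(\Gamma)-1+\overline T<\beta_1(\Gamma)+\overline T$.

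The only step demanding any genuine thought is the bijection between equivalence classes of widges and of $w$-cycles; this is where one must carefully unwind Definitions~\ref{defn:slim} and~\ref{defn:w-cycles} and invoke the uniqueness of the $w$-line carrying a given pre-widge. The Euler-characteristic bookkeeping that follows is routine and essentially identical to Lemma~\ref{lem:no tree isle}.
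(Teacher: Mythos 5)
Your proposal is correct and follows essentially the same route as the paper: the paper simply notes that Lemma~\ref{lem:no tree isle}'s Euler-characteristic decomposition goes through verbatim with great widges and great isles in place of widges and isles, and your argument is exactly that decomposition. The main difference is that you carefully spell out the identification of the number of great widges with $\overline\#_w(\Gamma)$ via the chain (equivalence classes of widges) $\leftrightarrow$ ($\pi_1\Gamma$-orbits of $w$-lines) $\leftrightarrow$ (equivalence classes of $w$-cycles), a point the paper leaves implicit in its one-line proof; this bookkeeping is sound and is the right justification for the step ``number of great widges $= \overline\#_w(\Gamma)$.''
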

\begin{proof}
  This is proved like Lemma~\ref{lem:no tree isle} replacing
  $\#_w(\Gamma)$ by $\overline\#_w(\Gamma)$ and  $T$ by $\overline T$.
  \end{proof}

A great widge is a \emph{local widge} to a great isle if its
corresponding $w$-cycle does not traverse an edge in any other
great isle.

\begin{lem}\label{lem:collapsing J is good}
  Let $\Gamma\to X$ be an immersion of a locally finite, connected
  graph. Let $I$ be a great isle that is a finite tree. Let $J$ be the
  union of $I$ and its local widges. If $\pi_1J\to\pi_1X$ has trivial
  image, then $I$ is the unique great isle.
\end{lem}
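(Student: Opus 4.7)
The plan is to argue by contradiction: suppose $I$ is not the unique great isle, so there exists another great isle $I'\neq I$. Using the hypothesis $\pi_1 J\to\pi_1 X = 1$, the map $J\to X$ lifts to an embedding into the universal cover $\widetilde X$, with image $\widehat J$ lying in the $1$-skeleton $\widetilde X^1$. Fixing a basepoint in $I$, the tree $I$ lifts canonically, and the local widges extend this lift uniquely. Each local widge of $I$ then corresponds to an edge of $\widehat J$ which, under the $\pi_1 X$-invariant preorder on $\edges(\widetilde X)$, is a pre-widge of its associated $w$-line viewed in $\widetilde X^1$.

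By connectedness of $\Gamma$, the non-uniqueness of $I$ yields a great widge $e$ whose endpoint lies in the vertex set of $J$ and which is \emph{not} local to $I$; consequently, its $w$-cycle $c_e$ traverses an edge in some great isle $I''\neq I$. I would lift $c_e$ starting at a vertex of $\widehat J$; since $c_e$ is null-homotopic in $X$ (as the boundary of a $2$-cell), the lift closes to a loop $\hat c_e \subset \widetilde X$. The slim properties~\eqref{slim:trav once} and~\eqref{slim:distinct}, combined with the closure of $\hat c_e$ and the pre-widge structure on $\widehat J$, should force $\hat c_e$ to lie entirely inside $\widehat J$---making $c_e$ a loop in $J$, which contradicts its traversal of an edge outside $J$.

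The main obstacle will be reconciling the two distinct universal covers $\widetilde{X^1}$ (where pre-widges are originally defined) and $\widetilde X^1$ (where $J$ lifts via the hypothesis), so that the slim properties on $\widetilde X$ can be correctly applied to the lifted $w$-cycle in $\widehat J$. Additionally, the minimality-type argument of Theorem~\ref{thm:bislim} must be adapted to the slim setting, where the distinguished edge $e^-_{\widetilde R}$ is absent; here, the triviality of $\pi_1 J$ in $\pi_1 X$ is expected to compensate by closing lifts of $w$-cycles inside $\widehat J$, playing the role that the edge $e^-_{\widetilde R}$ played in the bi-slim case.
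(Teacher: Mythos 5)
Your plan hinges on the claim that, once you lift a nonlocal $w$-cycle $c_e$ to $\widetilde X$ starting inside $\widehat J$, the slim axioms will force the whole lifted loop to stay in $\widehat J$. This is where the proof must live, and the proposal gives no argument for it---indeed, the fact that $\hat c_e$ closes up is automatic (every $w$-cycle bounds a $2$-cell, so every one lifts to a closed loop in $\widetilde X$) and tells you nothing about $\widehat J$. There is no mechanism in slim axioms \eqref{slim:trav once}--\eqref{slim:distinct} that prevents a $w$-line from leaving and re-entering a subtree; you need to compare the \emph{orders} of the exit and re-entry edges, and that comparison is entirely absent from your sketch. Also note that the two covers $\widetilde{X^1}$ and $\widetilde X^1$ are already reconciled in the paper's Pre-widges section by pulling back the preorder along $\widetilde{X^1}\to\widetilde X^1$; that is not where the difficulty lies.

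The actual role of the hypothesis $\pi_1 J\to\pi_1 X = 1$ in the paper's argument is different from what you guessed: it forces $\stab(\widetilde J)\subset\ker(\pi_1 X^1\to\pi_1 X)$, so the edges of $\widetilde\Gamma$ incident to $\widetilde J$ fall into only finitely many $\preceq$-classes, which is exactly what you need for a \emph{minimal} great pre-widge $\tilde e_1$ incident to $\widetilde J$ to exist. The proof then runs an extremal argument: since $e_1$ is not local, its $w$-line $\ell_1$ exits and re-enters $\widetilde J$ through another great pre-widge $\tilde e_2$; minimality of $\tilde e_1$ gives $\tilde e_1\preceq\tilde e_2$, while strict maximality of $\tilde e_1$ in $\edges(\ell_1)$ gives $\tilde e_2\preceq\tilde e_1$; hence $\tilde e_2$ is also a pre-widge of $\ell_1$, lies in the same $\stab(\ell_1)$-orbit as $\tilde e_1$, and has the same image in $\Gamma$ --- making $e_1$ local after all. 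Without this squeeze between the two competing extremality conditions, the contradiction simply does not materialize, and your outline does not supply a substitute.
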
\begin{com}this works more generally with:  If $\pi_1J\to\pi_1X$ has
  trivially ordered  image!\end{com}
\begin{proof}
  Consider $\widetilde J\subset \widetilde \Gamma$.  Since
  $\stab(\widetilde J)$ is a subgroup of
  $\ker(\pi_1X^1\rightarrow \pi_1X)$ we see that there are finitely
  many $\preceq$-equivalence classes of edges incident with $\widetilde J$ in $\widetilde \Gamma$.

  Let $\tilde e_1$ be a minimal great pre-widge with a single vertex in $\widetilde J$;
  its image $e_1$ is not a local widge. Consider its $w$-line
  $\ell_1$. Since $e_1$ is not a local widge of $I$, the line $\ell_1$
  contains another great pre-widge $\tilde e_2$ with a single vertex in $\widetilde J$.
  Since $\tilde e_2$ is on $\ell_1$, we must have
  $\tilde e_2\preceq \tilde e_1$. The minimality of $\tilde e_1$ obviates
  $\tilde e_2\prec \tilde e_1$.

  If $\tilde e_1\preceq \tilde e_2$ then $\tilde e_2$ is a pre-widge of $\ell_1$ since
  $\tilde e_1$ is a pre-widge. As $\tilde e_2$ is a great pre-widge, it must
  be a great pre-widge of $\ell_1$ and hence in the same $\stab(\ell_1)$-orbit as
  $\tilde e_1$. But then $\tilde e_1$ and $\tilde e_2$ have the same image in $\Gamma$,
  which is thus a local widge as the path from $\tilde e_1$ to $\tilde e_2$ is in
  $\widetilde J$.  This contradicts that $\tilde e_1$ is not a local widge.

As each great widge incident to $I$ is local, we see that $I$ is the only isle.
  \end{proof}

\begin{lem}\label{lem:unique tree isle}
  Let $\Gamma\to X$ be an immersion of a locally finite, connected
  graph. If some great isle is a finite tree, then it is the only
  great isle.
\end{lem}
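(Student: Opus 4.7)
The plan is to deduce Lemma~\ref{lem:unique tree isle} from Lemma~\ref{lem:collapsing J is good} by passing to a covering of $\Gamma$ in which the hypothesis on $\pi_1 J$ becomes automatic. Let $I$ be a great isle that is a finite tree and set $J=I\cup\{\text{local widges of }I\}$; by local finiteness of $\Gamma$, $J$ is a finite subgraph.

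First I would take $\hat\Gamma\to\Gamma$ to be the covering corresponding to $K=\ker(\pi_1\Gamma\to\pi_1 X)$. This is connected since $\widetilde\Gamma$ is. Let $\hat J\subset\hat\Gamma$ be the lift of $J$ through a chosen basepoint and $\hat I\subset\hat J$ the corresponding lift of $I$. Since $I$ has no nontrivial covers, $\hat I$ is a finite tree isomorphic to $I$; and since $\pi_1\hat J\subseteq\pi_1\hat\Gamma=K$, the map $\pi_1\hat J\to\pi_1 X$ is trivial by construction.

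Next I would install a widge/isle structure on $\hat\Gamma$ compatible with that of $\Gamma$, selecting the great widges of $\hat\Gamma$ from among the preimages of great widges of $\Gamma$ so that the components of $\hat\Gamma$ minus its great widges are precisely the lifts of great isles of $\Gamma$; in particular $\hat I$ is itself a great isle, and the local widges of $\hat I$ lie in $\hat J$. Applying Lemma~\ref{lem:collapsing J is good} to $\hat\Gamma$ with the pair $(\hat I,\hat J)$ then yields that $\hat I$ is the unique great isle of $\hat\Gamma$. If $\Gamma$ possessed a second great isle $I'$, any lift would furnish a second great isle of $\hat\Gamma$, contradicting this uniqueness.

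The hard part is ensuring that the widge structure described above on $\hat\Gamma$ is compatible with the one-representative-per-equivalence-class requirement in the definition of great widges. Because $\pi_1\hat\Gamma\leq\pi_1\Gamma$ acts by a smaller group on the $w$-lines in $\widetilde\Gamma$, equivalence classes in $\hat\Gamma$ refine those in $\Gamma$, and one must verify that preimages of great widges of $\Gamma$ can be selected to represent every $\hat\Gamma$-equivalence class while keeping $\hat I$ as a single component. Once this bookkeeping is settled, the argument of Lemma~\ref{lem:collapsing J is good}, which depends only on the $\pi_1X^1$-invariant preorder and the local isle decomposition, carries over without change.
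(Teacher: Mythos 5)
Your argument is correct in spirit and is a genuine simplification of the paper's: both pass to the cover $\widehat\Gamma\to\Gamma$ corresponding to $\ker(\pi_1\Gamma\to\pi_1X)$ and both invoke Lemma~\ref{lem:collapsing J is good}, but the paper runs a minimal-counterexample argument (minimizing the number of local widges), concludes that $\pi_1J\to\pi_1X$ must then be nontrivial, and derives a contradiction because the lift $\widehat I$ has strictly fewer local widges than $I$. You instead observe that the $\pi_1\to\pi_1X$-triviality hypothesis in Lemma~\ref{lem:collapsing J is good} becomes automatic once you are in $\widehat\Gamma$, because any subgraph of $\widehat\Gamma$ has $\pi_1$ landing in $\pi_1\widehat\Gamma=\ker(\pi_1\Gamma\to\pi_1X)\subseteq\ker(\pi_1X^1\to\pi_1X)$; this removes the extremal bookkeeping entirely. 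In fact your argument also shows directly (rather than a posteriori, as in Lemma~\ref{lem: unique isle collapses}) that $\pi_1\Gamma\to\pi_1X$ must be trivial when there is a tree great isle.

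One slip worth flagging: you apply Lemma~\ref{lem:collapsing J is good} to the pair $(\widehat I,\widehat J)$ where $\widehat J$ is the component of the preimage of $J$ containing $\widehat I$. But the lemma requires its ``$J$'' to be \emph{exactly} the union of the isle with its local widges, and if $\pi_1J\to\pi_1X$ is nontrivial then $\widehat J$ is a proper cover of $J$ and is strictly larger than $\widehat I\cup\{\text{local widges of }\widehat I\}$ (it contains other copies of $I$). The fix is immediate: apply the lemma to $\widehat J':=\widehat I\cup\{\text{local widges of }\widehat I\text{ in }\widehat\Gamma\}$; since $\widehat J'\subseteq\widehat\Gamma$, the map $\pi_1\widehat J'\to\pi_1X$ is trivial for the same reason. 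The compatibility issue you flag — that preimages of great widges of $\Gamma$ give a legitimate choice of great widges for $\widehat\Gamma$, so that great isles of $\widehat\Gamma$ are exactly lifts of great isles of $\Gamma$ — is the same assertion the paper makes (``We can thus choose the great widges of $\widehat\Gamma$ to be the pre-images of great widges of $\Gamma$'') and is equally needed in both approaches, so it is not an additional gap in your version.
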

\begin{proof}
  Let $\Gamma$ be a counterexample that is minimal in the sense that
  it has a tree great isle $I$ with fewest local widges
  among all tree great isles of all counterexamples.

  Let $J$ be the union of $I$ with all its local widges. Observe that
  $\pi_1J\to\pi_1X$ has nontrivial image, for otherwise,
  Lemma~\ref{lem:collapsing J is good} implies that $I$ is the only
  great isle.


  Let $\widehat \Gamma\to \Gamma$ be the cover corresponding to
  $\ker(\pi_1\Gamma\to\pi_1X)$.
 Each $w$-cycle of $\Gamma$ lifts to a
  $w$-cycle of $\widehat \Gamma$, and moreover, every $w$-cycle of $\widehat \Gamma$ arises in this way.
   We can thus choose the great widges of
  $\widehat \Gamma$ to be the pre-images of great widges of $\Gamma$.  Hence
  any pre-image $\widehat I$ of $I$ is a great isle of $\widehat \Gamma$.
  Since $\widehat I$ has fewer local widges than $I$ we obtain a
  smaller counterexample $\widehat \Gamma$.
\end{proof}

\begin{lem}\label{lem: unique isle collapses}
  Let $\Gamma\to X$ be an immersion of a finite connected graph. If
  $\Gamma$ has a great isle that is a finite tree, then $\Gamma^w$
  collapses to a tree.
\end{lem}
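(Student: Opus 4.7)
The plan is to induct on the number of great widges of $\Gamma$. By Lemma~\ref{lem:unique tree isle}, the tree great isle $I$ is the unique great isle, and hence every great widge is local to $I$ with both endpoints in $I$. When $\Gamma$ has no great widges, $\Gamma=I$ is already a tree and $\Gamma^w=\Gamma$ collapses trivially.

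For the inductive step, I would exhibit a free face of some $2$-cell in $\Gamma^w$. Fix a lift $\widetilde I\subset\widetilde\Gamma$; since $I$ is a finite tree and each great widge is attached to vertices of $I$, the set $A$ of great pre-widges with at least one endpoint in $\widetilde I$ is finite. Choose $\tilde e\in A$ that is $\preceq$-maximal; let $\ell$ be the $w$-line of which $\tilde e$ is a pre-widge, let $e$ be the corresponding great widge (the image of $\tilde e$ in $\Gamma$), and let $R_e$ be its $2$-cell in $\Gamma^w$.

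I would first verify that $\boundary R_e$ traverses $e$ exactly once: within one $\stab_{\pi_1\Gamma}(\ell)$-fundamental domain of $\ell$, two distinct pre-widges $\tilde a\neq\tilde b$ of $\ell$ cannot be $\pi_1\Gamma$-related, since $g\tilde a=\tilde b$ would make $\tilde b$ a pre-widge of both $\ell$ and $g\ell$, forcing $g\ell=\ell$ by Definition~\ref{defn:slim}.\eqref{slim:distinct} and hence $g\in\stab_{\pi_1\Gamma}(\ell)$. Next, I would show that no other $2$-cell $R_{e'}$ has $e$ on its boundary: if it did, then its $w$-line $\ell'$ would pass through some lift of $e$, which by a suitable $\pi_1\Gamma$-translation (which also moves $\ell'$ to a $w$-line in the same $\pi_1\Gamma$-orbit and hence corresponding to the same $2$-cell $R_{e'}$) may be assumed to be $\tilde e$ itself. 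With $\tilde e\in\ell\cap\ell'$ and $\ell\neq\ell'$, Definition~\ref{defn:slim}.\eqref{slim:distinct} implies that $\tilde e$ is not a pre-widge of $\ell'$, so $\tilde e$ lies strictly below every pre-widge of $\ell'$ in $\preceq$. Because $\ell'$ enters and exits the finite subtree $\widetilde I$ along edges which must be lifts of great widges, I would arrange one such exit edge to serve as a great pre-widge of $\ell'$ lying in $A$ and strictly above $\tilde e$, contradicting the maximality of $\tilde e$.

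Collapsing the free pair $(e,R_e)$ yields a smaller $2$-complex of the same form, so the induction proceeds. The main obstacle is the last step: one must ensure that the chosen exit edge of $\ell'$ from $\widetilde I$ is itself a pre-widge of $\ell'$ (rather than merely a pre-widge of some other $w$-line passing through it), and this requires a careful analysis using that $\widetilde\Gamma$ is a tree, that $\ell'\cap\widetilde I$ is an arc, and tracking the $\pi_1\Gamma$-orbit structure of pre-widges along $\ell'$.
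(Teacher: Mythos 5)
There is a genuine gap, and you have correctly identified where it lies — but it is not a small analytical detail to be cleaned up; it is the central missing idea, and without it the argument does not close.

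Your choice of $\tilde e$ as a $\preceq$-maximal element of the finite set $A$ of great pre-widges with an endpoint in the fixed lift $\widetilde I$ is where the trouble starts. When a second $w$-line $\ell'$ traverses a lift of $e$, Definition~\ref{defn:slim}.\eqref{slim:distinct} only places $\tilde e$ strictly below the pre-widges \emph{of $\ell'$}; it says nothing about the other great pre-widges that happen to lie along $\ell'$. You then look for $\ell'$'s exit from $\widetilde I$, and that exit edge $\tilde f$ is indeed a great pre-widge and is indeed in $A$ --- but nothing forces $\tilde f$ to be a pre-widge of $\ell'$ rather than of some third $w$-line $\ell''$. In that case $\tilde f\prec\big(\text{pre-widge of }\ell'\big)$, so $\tilde f$ and $\tilde e$ are simply both below the (possibly far-away) pre-widges of $\ell'$, and you get no comparison between $\tilde e$ and $\tilde f$. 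There is no reason an iteration of this reasoning must terminate in a contradiction; the preorder can fail to relate $\tilde e$ and $\tilde f$, and the pre-widges of $\ell'$ need not have any endpoint in $\widetilde I$, so they need not lie in your finite set $A$ at all. Localizing the maximality to edges incident with $\widetilde I$ is exactly what makes the argument fail.

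The paper's proof does something you omit, and it is precisely the step that dissolves this difficulty. Before touching any maximality argument, it first shows that $\pi_1\Gamma\to\pi_1X$ is \emph{trivial}. This is again an application of Lemma~\ref{lem:unique tree isle}, but in a stronger way than you use it: if $\pi_1\Gamma\to\pi_1X$ were nontrivial, the cover $\widehat\Gamma\to\Gamma$ associated to its kernel would have degree $>1$, and the preimage of the tree $I$ would consist of multiple isomorphic copies, each a finite tree great isle of $\widehat\Gamma$ (choosing the great widges of $\widehat\Gamma$ to be the preimages of those of $\Gamma$) --- contradicting Lemma~\ref{lem:unique tree isle}. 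Once $\pi_1\Gamma\to\pi_1X$ is trivial, every element of $\pi_1\Gamma$ acts trivially on the $\preceq$-equivalence classes (since the preorder on $\edges(\widetilde X)$ is $\pi_1X$-invariant and $\pi_1\Gamma$ maps trivially to $\pi_1X$), so there are only finitely many $\preceq$-classes of edges in \emph{all} of $\widetilde\Gamma$. One may then take $\tilde e$ to be a maximal pre-widge in $\widetilde\Gamma$ \emph{globally}. Now a second $w$-line $\ell'$ through $\tilde e$ immediately yields a pre-widge of $\ell'$ that lies strictly above $\tilde e$ --- and since it is a pre-widge in $\widetilde\Gamma$, it is in the set over which $\tilde e$ was maximal. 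That is the contradiction you were missing, and it needed no tracking of exit edges. So the fix is conceptual, not technical: establish triviality of $\pi_1\Gamma\to\pi_1X$ first, then maximize globally rather than over $A$.
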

\begin{proof}
  We first verify that $\pi_1\Gamma\rightarrow\pi_1 X$ is trivial.
  Indeed, otherwise, the corresponding cover $\widehat \Gamma$ has
  $\deg(\widehat \Gamma\rightarrow \Gamma)$ finite tree great isles,
  violating Lemma~\ref{lem:unique tree isle}.

  Since every widge in $\Gamma$ is local to its unique tree great
  isle, we conclude, as in the proof of Lemma~\ref{lem:collapsing J is
    good}, that there are finitely many $\preceq$-equivalence classes
  of edges in $\widetilde\Gamma$. Let $\tilde e$ be a maximal
  pre-widge in $\widetilde\Gamma$. Then by maximality, $\tilde e$
  cannot lie in another $w$-line. Hence its image $e$ in $\Gamma$ is a
  free face in $\Gamma^w$ and the $2$-cell on which $e$ lies can be
  collapsed. Repeating this process, we collapse $\Gamma^w$ to a tree.
\end{proof}

%
%
\begin{thm}\label{thm:main}
  Let $X$ be slim. Let $\Gamma\to X^1$ be an
  \begin{com}If disconnected then it collapses to a forest\end{com}
  immersed finite connected nonempty graph. Then:
  \begin{equation}\label{eq:main}
    \overline\#_w(\Gamma)\le \beta_1(\Gamma)
  \end{equation}
  Moreover, if $\overline\#_w(\Gamma)=\beta_1(\Gamma)$, then
  $\Gamma^w$ collapses to a tree.
\end{thm}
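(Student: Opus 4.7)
The plan is to deduce the theorem directly from the three preceding lemmas about great isles, by splitting on whether $\Gamma$ possesses a great isle that is a tree. The heavy lifting has been done: Lemma~\ref{lem:no tree great isle} provides the Euler-characteristic bound $\overline\#_w(\Gamma)<\beta_1(\Gamma)+\overline T$; Lemma~\ref{lem:unique tree isle} shows that if a tree great isle exists then $\overline T=1$; and Lemma~\ref{lem: unique isle collapses} converts the existence of a tree great isle into a collapse of $\Gamma^w$.

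First I would treat the case where no great isle of $\Gamma$ is a tree. Since $\Gamma$ is finite, every great isle is a finite graph, and so in this case $\overline T=0$. Lemma~\ref{lem:no tree great isle} then yields the strict inequality $\overline\#_w(\Gamma)<\beta_1(\Gamma)$, which in particular establishes~\eqref{eq:main} and makes the hypothesis of the ``moreover'' clause vacuous.

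Next I would handle the case where at least one great isle $I$ of $\Gamma$ is a (necessarily finite) tree. By Lemma~\ref{lem:unique tree isle}, $I$ is the unique great isle, so $\overline T=1$, and Lemma~\ref{lem:no tree great isle} gives $\overline\#_w(\Gamma)<\beta_1(\Gamma)+1$, that is, $\overline\#_w(\Gamma)\le\beta_1(\Gamma)$. Moreover, Lemma~\ref{lem: unique isle collapses} applies and yields that $\Gamma^w$ collapses to a tree, regardless of whether equality actually holds.

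Combining the two cases, \eqref{eq:main} holds in general, and whenever $\overline\#_w(\Gamma)=\beta_1(\Gamma)$ we must be in the second case, in which $\Gamma^w$ collapses to a tree. I do not anticipate a real obstacle beyond keeping track of the case split; the only subtle point is recognizing that the ``moreover'' conclusion is a straight consequence of Lemma~\ref{lem: unique isle collapses} once Lemma~\ref{lem:unique tree isle} has pinned $\overline T$ down to at most $1$.
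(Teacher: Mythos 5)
Your proposal is correct and is essentially the paper's own argument: bound $\overline{T}\le 1$ via Lemma~\ref{lem:unique tree isle}, deduce \eqref{eq:main} from Lemma~\ref{lem:no tree great isle}, and get the collapse from Lemma~\ref{lem: unique isle collapses} once equality forces $\overline T = 1$. Your version merely spells out the case split that the paper leaves implicit.
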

\begin{proof}
  By Lemma~\ref{lem:unique tree isle}, at most one great isle of
  $\Gamma$ is a tree. Hence $\overline \#_w(\Gamma)\le\beta_1(\Gamma)$ by
  Lemma~\ref{lem:no tree great isle}.
  The ``moreover'' part follows from Lemma~\ref{lem: unique isle collapses}.
\end{proof}

\begin{cor}\label{cor:slim nonpositive immersions}
  If $X$ is slim, then $X$ has nonpositive immersions.
\end{cor}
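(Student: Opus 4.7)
The plan is to mirror the proof of Corollary~\ref{cor:bi-slim nonpositive immersions}, substituting Theorem~\ref{thm:main} for Corollary~\ref{cor:bi-slim collapse}. Given a combinatorial immersion $Y\to X$ with $Y$ compact and connected and $\euler(Y)>0$, we aim to show that $Y$ is contractible. Set $\Gamma=Y^1$ and let $F$ denote the number of $2$-cells of $Y$.

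I would then establish the chain
\begin{equation*}
\overline\#_w(\Gamma)\le\beta_1(\Gamma)\le F\le\overline\#_w(\Gamma).
\end{equation*}
The leftmost inequality is Theorem~\ref{thm:main}. The middle inequality follows from $\euler(Y)=1-\beta_1(\Gamma)+F\ge 1$. The rightmost inequality counts: each $2$-cell of $Y$ determines a $w$-cycle equivalence class in $\Gamma$ via its attaching map, and we want the resulting correspondence to be injective. Granted the chain, all three inequalities are equalities, and the ``moreover'' clause of Theorem~\ref{thm:main} yields that $\Gamma^w$ collapses to a tree; combined with $F=\overline\#_w(\Gamma)$, which allows us to identify $Y$ with $\Gamma^w$, this makes $Y$ contractible.

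The main obstacle is the rightmost inequality $F\le\overline\#_w(\Gamma)$. In the bi-slim proof the analogous step was $F\le\#_w(\Gamma)$, which is immediate because distinct $2$-cells of an immersion determine distinct \emph{based} $w$-cycles. In the slim setting we count equivalence classes, so two $2$-cells of $Y$ may have based attaching maps that are distinct as $w$-cycles yet equivalent in the sense of Definition~\ref{defn:w-cycles}, i.e.\ joined by a $w^m$-path in $\Gamma$. To bridge the gap I would carry out a preliminary reduction of $Y$: whenever two of its $2$-cells have equivalent attaching $w$-cycles, combine them into a single $2$-cell by using the connecting $w^m$-path as a band, arriving at a complex in which distinct $2$-cells represent distinct equivalence classes and hence $Y=\Gamma^w$ by construction.

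Verifying that this combining move preserves both $\euler(Y)$ and the immersion property $Y\to X$ is the principal technical hurdle; the slim preorder on edges of $\widetilde X$ is the tool I would use to control the band configurations. Once the reduction is in place, $F=\overline\#_w(\Gamma)$ holds tautologically and the three-term chain closes as above, completing the proof by the collapse clause of Theorem~\ref{thm:main}.
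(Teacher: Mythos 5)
Your skeleton is exactly the paper's: it simply points back to the proof of Corollary~\ref{cor:bi-slim nonpositive immersions} with Theorem~\ref{thm:main} in place of Corollary~\ref{cor:bi-slim collapse}, and the three-term chain
$\overline\#_w(\Gamma)\le\beta_1(\Gamma)\le F\le\overline\#_w(\Gamma)$
that you write down is the right one (with bars on all three occurrences, which fixes an apparent typo in the printed chain~\eqref{eq:three in a row}). The first inequality is Theorem~\ref{thm:main}, the second is $\euler(Y)>0$, and once all become equalities, $Y\cong\Gamma^w$ collapses to a tree by the ``moreover'' clause.

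However, the ``main obstacle'' you identify, $F\le\overline\#_w(\Gamma)$, is not an obstacle, and the band-surgery you propose to deal with it is both unnecessary and unsound. The point you have missed is that the $w_i$-cycle attached to a $2$-cell $R$ of $Y$ is the periodic extension of $\boundary_\p R$, which is a closed path of length exactly $|w_i|$; hence the corresponding lift $\widetilde w_i\to\Gamma$ is periodic with minimal period $|w_i|$ (using that $w_i$ is not a proper power). Such a lift is fixed by every element of $\Aut(\widetilde w_i)$, so its equivalence class under Definition~\ref{defn:w-cycles} is a \emph{singleton}. Consequently two $2$-cells of $Y$ give equivalent $w$-cycles only if they have \emph{identical} attaching maps, and two distinct $2$-cells with identical attaching maps produce two corners in a vertex link with the same image in $X$, contradicting that $Y\to X$ is an immersion. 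So distinct $2$-cells give inequivalent $w$-cycles, $F\le\overline\#_w(\Gamma)$ holds as immediately in the slim case as $F\le\#_w(\Gamma)$ does in the bi-slim case, and the identification $Y=\Gamma^w$ needs no preliminary surgery. (Your proposed move of merging two $2$-cells along a $w^m$-band would in any case change $\euler(Y)$ and would not obviously preserve the immersion property, so it could not be carried out harmlessly even if it were needed.)
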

\begin{proof}
Replace Corollary~\ref{cor:bi-slim collapse} by Theorem~\ref{thm:main}
in the proof of Corollary~\ref{cor:bi-slim nonpositive immersions}.\end{proof}

\section{Nonpositive immersions when there is torsion}\label{sec:torsion}
There are various ways of obtaining a version of nonpositive immersions for a one-relator group with torsion.
Since one-relator groups with torsion are virtually torsion free (this holds more generally for staggered presentations where all relators are proper powers) the following provides a useful interpretation:
\begin{thm}\label{thm:npi superduper}
 Let $X_*$ be a $2$-complex whose $i$-th $2$-cell has attaching map $w_i^{n_i}$ where $w_i$ is not a proper power.
Let $X$ be the $2$-complex with $X^1 = X_*^1$ and whose $i$-th $2$-cell has attaching map $w_i$.
Let $\widehat X_*\rightarrow X_*$ be a finite regular cover such that no $2$-cell is attached along a proper power.
Let $Z$ be a subcomplex of $\widehat X_*$ that contains exactly one
$2$-cell from each set of $n_i$ $2$-cells attached along a lift of
$w_i^{n_i}$.

If $X$ is slim then $Z$ has nonpositive immersions.

If $X$ is bi-slim, and let $n=\min_i(n_i)$.
Then for any collapsed immersion $Y\rightarrow Z$, either $Y$ has an isolated edge, or $Y$ is a vertex,
 or $\euler(Y)\leq -(n-1)|\text{$2$-cells}(Y)|$.
 \end{thm}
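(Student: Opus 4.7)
The plan is to apply Theorem~\ref{thm:main} (in the slim case) or Theorem~\ref{thm:bislim} (in the bi-slim case) to the immersion $Y^1\to X^1$ obtained by composing the $1$-skeletal maps $Y^1\to Z^1\hookrightarrow\widehat X_*^1\to X_*^1=X^1$. Each $2$-cell $R$ of $Y$ maps homeomorphically to a $2$-cell $R'$ of $Z$, which is attached along a lift $\hat v_{i(R)}$ of $w_{i(R)}^{n_{i(R)}}$, so the boundary $\partial R\to Y^1$ is a combinatorial loop labeled $w_{i(R)}^{n_{i(R)}}$ and determines a $w_{i(R)}$-cycle in $Y^1$ (in the sense of Definition~\ref{defn:w-cycles}). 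This $w_{i(R)}$-cycle contributes $1$ to $\overline\#_w(Y^1)$ and $n_{i(R)}$ to $\#_w(Y^1)$, one lift of $\widetilde w_{i(R)}$ for each of the $n_{i(R)}$ basepoint choices on $\partial R$.

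The key technical step, which I expect to be the main obstacle, is to verify that distinct $2$-cells of $Y$ give distinct $w$-cycle equivalence classes in $Y^1$; the additivity $\#_w(Y^1)\geq\sum_R n_{i(R)}$ then follows because two basepoint cycles sharing a starting vertex over the basepoint of $v_i$ would coincide (the lift of $\widetilde w_i$ from a vertex is determined by the immersion) and hence would force their image loops to agree. This distinctness should follow from: (i) the construction of $Z$ as selecting exactly one of the $n_i$ $2$-cells of $\widehat X_*$ attached along each common lifted loop, and (ii) each lifted attaching loop $\hat v_i$ being edge-embedded in $\widehat X_*^1$, which in turn should follow, via the slim structure on $X$, from a generalization of Weinbaum's Subword Theorem of the sort invoked in the proof of Proposition~\ref{prop:staggered is bislim}. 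Together these force that two distinct $2$-cells of $Y$ attached along a common image loop in $Y^1$ would cause the link of some edge in $Y$ to map non-injectively to its image in $Z$, contradicting the immersion condition.

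For the slim case, Theorem~\ref{thm:main} yields $F\leq\overline\#_w(Y^1)\leq\beta_1(Y^1)$ where $F=|\text{2-cells}(Y)|$, so $\chi(Y)=1-\beta_1(Y^1)+F\leq 1$. If $\chi(Y)<1$ then $\chi(Y)\leq 0$. Otherwise $F=\overline\#_w(Y^1)=\beta_1(Y^1)$, the ``moreover'' part of Theorem~\ref{thm:main} gives that $(Y^1)^w$ collapses to a tree, and the bijection between $2$-cells of $Y$ and $w$-cycle equivalence classes identifies $(Y^1)^w$ with $Y$ up to reparameterization of attaching maps, so $Y$ is contractible. This establishes that $Z$ has nonpositive immersions.

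For the bi-slim case, suppose $Y$ is collapsed, has no isolated edge, and is not a single vertex. Every edge of $Y^1$ lies in some $\partial R$, and the collapsedness hypothesis ensures that either (a) the edge appears in the attaching loops of at least two distinct $2$-cells of $Y$, so at least two $w$-cycles traverse it, or (b) some $\partial R$ traverses the edge at least twice, so a single $w$-cycle does; in either case the hypothesis of Theorem~\ref{thm:bislim} is satisfied and $\#_w(Y^1)<\beta_1(Y^1)$. By the additivity from the distinctness step, $\#_w(Y^1)\geq\sum_R n_{i(R)}\geq nF$, so $nF<\beta_1(Y^1)$. Substituting into $\chi(Y)=1-\beta_1(Y^1)+F$ gives $\chi(Y)<1-(n-1)F$, and integrality of $\chi(Y)$ forces $\chi(Y)\leq-(n-1)F$.
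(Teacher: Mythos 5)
Your strategy matches the paper's: pass to the induced immersion $Y^1 \to X^1$ and invoke Theorem~\ref{thm:main} (resp.\ Theorem~\ref{thm:bislim}), using that each $2$-cell of $Y$ contributes $n_i$ distinct $w_i$-cycles, all in one equivalence class. The gap is in how you justify that multiplicity. Your intermediate claim (ii), that each lifted attaching circuit $\hat v_i$ is edge-embedded in $\widehat X_*^1$, is false in general: Weinbaum-type subword results give embedding of $\partial_\p\widetilde R$ in the \emph{universal} cover of $X$ (as in Proposition~\ref{prop:staggered is bislim} and Corollary~\ref{cor:tight}), not in an arbitrary finite cover of $X_*$. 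A concrete counterexample: for $X_*=\langle a,b\mid (aba^{-1}b)^2\rangle$ and the $\integers/4$-cover induced by $a,b\mapsto 1$, the lift of $(aba^{-1}b)^2$ at a fixed vertex traverses the same lift of $b$ twice, so is not even locally injective, even though it is not a proper power.

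What you actually need is weaker and follows directly from the stated hypothesis, with no appeal to Weinbaum. The $n_i$ ``basepoints'' on $\hat v_i$ (the vertices at positions $0,|w_i|,\ldots,(n_i-1)|w_i|$ along the lift of $w_i^{n_i}$) are distinct in $\widehat X_*^1$: if $p_0=p_k$ for some $0<k<n_i$, then uniqueness of path-lifting for $\widehat X_*^1\to X^1$ forces $p_j=p_{j+k\bmod n_i}$ for all $j$ and hence makes $\hat v_i$ periodic with period $\gcd(k,n_i)\,|w_i|<n_i|w_i|$, i.e.\ a proper power, contradicting the choice of $\widehat X_*$. Since the attaching map $\partial R\to Z^1$ factors through $\partial R\to Y^1$, distinctness of those $n_i$ points in $\widehat X_*^1$ gives distinctness in $Y^1$, which is exactly the multiplicity claim. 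Your across-$2$-cell disjointness (via the link condition and the fact that $Z$ contains exactly one $2$-cell over each lifted attaching circuit) then holds as you sketched, and the rest of your argument---deducing the Theorem~\ref{thm:bislim} hypothesis from collapsedness and absence of isolated edges, the integrality step, and identifying $(Y^1)^w$ with $Y$ in the equality case---is correct and fills in details that the paper's terse proof suppresses. So the approach is right; only the justification for multiplicity needs to be replaced by this path-lifting argument.
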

\begin{proof}
  Let $Y\rightarrow Z$ be an immersion with $Y$ compact and connected.
  Then the induced map $Y^1\rightarrow X$ is also an immersion, and
  each $w$-cycle in $Y^1$ with respect to $Z$ is also a $w$-cycle with
  respect to $X$. If $X$ is slim, then Theorem~\ref{thm:main} holds
  for $X$, and it follows that the conclusion of
  Theorem~\ref{thm:main} holds for $Z$ as well, and hence that $Z$ has
  nonpositive immersions (by the proof of Corlloary~\ref{cor:slim
    nonpositive immersions}).
\begin{com}
Direct proof (instead of reusing other proof):

We may suppose that $Y$ has no isolated edges, and since collapsing along free faces does not affect the euler characteristic, we may suppose that $Y\rightarrow Z$ is collapsed. Thus each edge of $Y^1$ is traversed in at least two ways by attaching maps of $2$-cells of $Y$.

Consider the map $Y^1\rightarrow X$. In the slim case, by Theorem~\ref{thm:main} either $Y^1$ is a vertex,
or $\overline\#_w(Y^1) < \beta_1(Y^1)$. Thus $\euler(Y) = \euler(Y^1)+|\text{$2$-cells}(Y)|
 \leq 1-\beta_1(Y^1)+\overline\#_w(Y^1) \leq 0$.
\end{com}

In the bi-slim case we note that, not only are the $w$-cycles of $Y^1$
with respect to $Z$ also $w$-cycles with respect to $X$, but each
$w_i$-cycle in $Y$ appears with multiplicity at least $n_i$. Since $X$
is bi-slim, if $Y$ is not a single vertex and has no isolated edge,
then by Theorem~\ref{thm:bislim} we have:
\[
n|\text{$2$-cells}(Y)|<\beta_1(Y^1)
\]
and hence
\[
\euler(Y)=(1-\beta_1(Y^1))+|\text{$2$-cells}(Y)|\le
(-n+1)|\text{$2$-cells}(Y)|.
\qedhere\]
\end{proof}

\section{Reducible is Slim}\label{sec:reduced}
\begin{defn}[Enlargement]
The connected combinatorial $2$-complex $Y$ is an \emph{$(R,e)$-enlargement} of the subcomplex $X$
if  $Y-X=R\cup e$ where $e$ is an open edge, and $R$ is an open $2$-cell,
and $\boundary_\p R$ traverses $e$ but $\boundary_\p R$ is not homotopic in $X\cup e$ to a path traversing $e$ fewer times.
Similarly, $Y$ is an \emph{$e$-enlargement} if $Y-X$ consists of a single open edge $e$.
An enlargement is \emph{simple} if either it is an $e$-enlargement, or it is an $(R,e)$-enlargement
and $\boundary_\p R$ is not homotopic to a proper power in $X\cup e$.
\end{defn}

Howie provided the following generalization of Weinbaum's subword
theorem \cite[Cor~3.4]{Howie82}:
\begin{lem}\label{lem:Howie's Weinbaum generalization}
 Let $Y$ be an $(R,e)$-enlargement of $X$.
 Suppose  $\boundary_\p R=P_1P_2$ where each $P_i$ is a closed path in $Y$
  that traverses $e$.
  Then each $P_i$ is essential in $Y$.
\end{lem}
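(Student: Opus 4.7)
The plan is to argue by contradiction. Suppose (without loss of generality) that $P_1$ is null-homotopic in $Y$. From this I will construct a homotopy of $\partial_p R$ inside $X\cup e$ to a closed path traversing $e$ strictly fewer than $k_1+k_2$ times, where $k_i$ is the number of times $P_i$ traverses $e$; this will contradict the defining property of the $(R,e)$-enlargement.

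Since $\pi_1(Y) = \pi_1(X\cup e)/\langle\!\langle \partial_p R\rangle\!\rangle$, the triviality of $P_1$ in $\pi_1(Y)$ yields a reduced van Kampen disk diagram $D\to Y$ with boundary cycle $P_1$ whose $2$-cells all carry the label $\partial_p R=P_1P_2$. Since $P_1$ traverses $e$, the diagram $D$ is nonempty. I would choose such a $D$ of minimal area, and then attach one extra copy of $R$ to $D$ along the $P_1$-subpath of its boundary to form a disk diagram $D'\to Y$ with boundary reading $P_2$, whose $2$-cells are all copies of $R$.

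The key step is to exploit an interior edge $\varepsilon$ of $D'$ mapping to $e$: such an edge must exist because each $2$-cell of $D'$ supplies $k_1+k_2\ge 2$ boundary edges over $e$ while $\partial D'$ accounts for only $k_2$ of them. The two $2$-cells $R^{(1)},R^{(2)}$ of $D'$ meeting along $\varepsilon$ form a sub-disk-diagram whose boundary is a closed path $\gamma$ in $X^1\cup e$ traversing $e$ exactly $2(k_1+k_2)-2$ times; this $\gamma$ is equal in $\pi_1(X\cup e)$ to a product $u\,(\partial_p R)^{\pm 1} u^{-1}\cdot v\,(\partial_p R)^{\pm 1} v^{-1}$ of two conjugates of the relator. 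Rearranging this identity and using reducedness of $D$ to rule out the possibility that the two $R$-copies cancel across $\varepsilon$, I would derive that $\partial_p R$ is homotopic in $X\cup e$ to the product of a conjugate of $(\partial_p R)^{\pm 1}$ and a path obtained from $\gamma$ by rerouting across $\varepsilon$; induction on area then yields a genuine homotopy of $\partial_p R$ in $X\cup e$ with strictly smaller $e$-count, delivering the contradiction.

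I expect the principal obstacle to be verifying that the local configuration at $\varepsilon$ is not a degenerate one in which the two copies of $\partial_p R$ cancel globally rather than producing an honest reduction. Reducedness of $D$ prohibits two adjacent $2$-cells of $D$ from cancelling across an interior edge, but the subtle case is when the single extra copy of $R$ used to form $D'$ is adjacent, across an interior $e$-edge, to a copy of $R$ already in $D$ in a locally cancelable way. In that case the cancellation can be used directly to construct a diagram for $P_1$ of strictly smaller area than $D$, contradicting the minimality of $D$ and allowing the argument to proceed by induction on area. The symmetric argument applies to $P_2$, completing the proof.
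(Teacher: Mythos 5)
The paper does not prove this lemma; it is quoted verbatim as Howie's Corollary~3.4, where it is established via Howie's theory of towers over locally indicable groups, a route quite different from the van~Kampen diagram argument you sketch. So there is no ``paper proof'' to compare against; the question is whether your outline works on its own terms, and I don't think it does.

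Two concrete problems. First, your ``cancelable'' case is misstated: removing the cancelable pair $\{R_{\mathrm{new}},S\}$ from $D'$ is a surgery that preserves the boundary label of $D'$, which is $P_2$, not $P_1$. You obtain a $P_2$-diagram of area $\mathrm{Area}(D)-1$, which says nothing about the minimal area for $P_1$: re-attaching a copy of $R$ only brings you back to a $P_1$-diagram of area $\mathrm{Area}(D)$. (One can try to iterate, swapping the roles of $P_1$ and $P_2$, but that is not what you wrote, and it only helps if the cancelable case recurs; as soon as it doesn't you are thrown back onto the second problem.) Second, and more seriously, the ``non-cancelable'' case --- which is the heart of the lemma --- is not carried out, and the algebra you gesture at does not obviously produce what you need. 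The identity you write expresses $\gamma$, whose $e$-count $2(k_1+k_2)-2$ is already at least that of $\boundary_\p R$, as a product of \emph{two} conjugates of $\boundary_\p R^{\pm1}$ in $\pi_1(X\cup e)$. Nothing in that identity exhibits $\boundary_\p R$ itself as homotopic \emph{in $X\cup e$} to a path traversing $e$ fewer times; ``rerouting across $\varepsilon$'' is a move inside a diagram over $Y$, and the homotopy it induces passes through the $2$-cell $R$ and so lives in $Y$, not in $X\cup e$. This is exactly the fundamental mismatch your sketch never resolves: the enlargement hypothesis you are trying to contradict is about homotopies in $X\cup e$, while every step of your argument takes place in $Y$. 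You need a mechanism for stripping the $R$-cells out of your witnessing diagram before the enlargement condition can be invoked --- that is precisely what Howie's tower argument accomplishes, and it is the missing ingredient here.
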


\begin{lem}\label{lem:redinject}
Let $Y$ be an $(R,e)$-enlargement of $X$. Suppose $\pi_1Y$ is left-orderable. Then $X\to Y$
  is $\pi_1$-injective on each component of $X$.
\end{lem}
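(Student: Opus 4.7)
The plan is to prove the contrapositive by a minimal van Kampen diagram argument powered by the left-ordering of $\pi_1 Y$. Suppose for contradiction that a nontrivial $h\in\pi_1(X_0)$ is trivial in $\pi_1(Y)$; represent $h$ by a closed edge-path $\gamma$ in $X_0$, and choose a disc diagram $D\to Y$ with boundary $\gamma$ having the fewest $2$-cells possible. Every $2$-cell of $D$ maps to $R$, and since $\gamma$ avoids $e$, every edge of $D$ mapping to $e$ lies in the interior of $D$ and is hence shared by exactly two $2$-cells of $D$.

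Lift $D$ to $\widetilde D\to\widetilde Y$ in the universal cover. I would first use Lemma~\ref{lem:Howie's Weinbaum generalization} to verify that for each lift $\widetilde R$ of $R$, the lifts of $e$ traversed by $\boundary\widetilde R$ are pairwise distinct in $\widetilde Y$: if two of them coincided, then $\boundary\widetilde R$ would revisit a vertex of $\widetilde Y$, and splitting $\boundary\widetilde R$ at that vertex would produce two closed subpaths of $\widetilde Y$, each null-homotopic in $\widetilde Y$ and so trivial in $\pi_1Y$. But Lemma~\ref{lem:Howie's Weinbaum generalization} would then declare both to be essential---provided each traverses $e$---yielding a contradiction.

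Next I would invoke the left-ordering on $\pi_1Y$: since $\pi_1Y$ acts freely and transitively on the lifts of $e$ in $\widetilde Y$, the left-ordering transports to a total order on these lifts. By left-invariance of the order, the combinatorial position within $\boundary R$ of the maximal $e$-edge of $\boundary\widetilde R$ is the same for every lift $\widetilde R$, so each $\widetilde R$ is determined by its maximal $e$-edge. Now select an $e$-edge $\widetilde\alpha$ of $\widetilde D$ maximal in the ordering. The two $2$-cells of $\widetilde D$ abutting $\widetilde\alpha$ must each have $\widetilde\alpha$ as their maximal $e$-edge (otherwise $\widetilde D$ would contain an $e$-edge exceeding $\widetilde\alpha$), so they must coincide as $2$-cells of $\widetilde Y$. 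Consequently the two corresponding $2$-cells of $D$ meeting at $\alpha$ form a cancellable pair, which can be removed to yield a diagram with fewer $2$-cells---contradicting the minimality of $D$.

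The main obstacle I foresee is the Howie-lemma step, particularly when $\boundary_\p R$ traverses $e$ in opposite directions at only two positions: there splitting $\boundary\widetilde R$ at an $e$-coincidence does not directly yield two subpaths both traversing $e$, so Lemma~\ref{lem:Howie's Weinbaum generalization} does not apply verbatim. In that case one expects to invoke the defining irreducibility of $\boundary_\p R$ in $X\cup e$ from the enlargement hypothesis to exclude such collisions. Once $\boundary\widetilde R$ is known to embed in $\widetilde Y$, the remainder of the argument is a relatively routine application of left-invariance and diagram minimality.
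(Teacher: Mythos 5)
Your broad strategy---minimal disk diagram, lifted to $\widetilde Y$, using a left-invariant order on the $\pi_1Y$-orbit of $\tilde e$ to extract a maximal $e$-edge and then a cancellable pair---is the same as the paper's. However, you route the main step through a stronger intermediate claim, namely that the $e$-edges traversed by each lifted boundary $\boundary\widetilde R$ are pairwise distinct (so that each lift is determined by its maximal $e$-edge), and this is exactly where a genuine gap remains. The problematic case you flag, $\boundary_\p R=\sigma e\sigma'e^{-1}$, cannot be dismissed the way you suggest. A coincidence $\tilde e_1=\tilde e_2$ in a lift of $\boundary_\p R$ is equivalent to $\sigma$ and $\sigma'$ lifting to closed loops in $\widetilde Y$, i.e.\ to $\sigma,\sigma'$ being null-homotopic \emph{in $Y$}. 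The enlargement hypothesis only rules out $\sigma$ (or $\sigma'$) being null-homotopic \emph{in $X\cup e$}; promoting this to null-homotopy in $Y$ requires knowing that $X\cup e\to Y$ is $\pi_1$-injective, which is precisely what the lemma is trying to establish. So ``invoking the defining irreducibility of $\boundary_\p R$'' is circular here, and Lemma~\ref{lem:Howie's Weinbaum generalization} is inapplicable because any splitting of $\boundary\widetilde R$ at the coincidence vertex produces a subpath that avoids $e$.

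The paper avoids this entirely by not asking for distinctness. For the maximal $e$-edge $f$ with abutting $2$-cells $S,T$, if $S\neq T$ one just notes $T=gS$ with $g\neq 1$ and that both $gf$ and $g^{-1}f$ lie in the image of $D$; since a total left-order forces $gf>f$ or $g^{-1}f>f$, maximality of $f$ is contradicted without ever knowing that the $e$-edges of $\boundary\widetilde R$ are distinct. If instead $S=T$, the paper analyzes how many boundary edges of this single lift hit $f$: one gives a cancellable pair, two with the same orientation contradicts Lemma~\ref{lem:Howie's Weinbaum generalization}, and the remaining possibility forces $\boundary_\p R=\sigma e\sigma'e^{-1}$ with $\sigma,\sigma'$ essential closed paths in $X$---at which point the paper \emph{switches arguments altogether}: $\pi_1Y$ is then an HNN extension or amalgam over $\langle\sigma\rangle$, and $\pi_1$-injectivity of each component of $X$ follows from Bass--Serre theory rather than from a diagram contradiction. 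That case-switch is the missing idea in your plan; without it the degenerate case cannot be closed off by the combinatorial argument. (One small side remark: ``every $2$-cell of $D$ maps to $R$'' is not quite right---$D$ can contain $2$-cells mapping into $X$---though only $2$-cells mapping to $R$ carry $e$-edges, which is all that matters.)
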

\begin{proof}
Arguing by contradiction, consider a minimal area disk diagram $D\to Y$ where
  $\boundary_\p D$ is an essential closed path in $X$.
  Choose a lift $D\to\widetilde Y$ to the universal cover of $Y$.
    We order the edges in $\widetilde Y$ mapping to $e$ by setting
  $g_1\tilde e<g_2\tilde e$ if $g_1<g_2$.
  Let  $f$ be a maximal edge in $\text{image}(D\to\widetilde Y)$  among all edges mapping to $e$.
  Let $f'$ be an edge of $D$ mapping to $f$.  Observe that
  $f' \subset \text{interior}(D)$ since $f'\not\subset \boundary D$ as
  $e\not\subset X$.  Let $S',T'$ denote the two $2$-cells of $D$ on
  opposite sides of $f'$, and let $S,T$ denote their images in
  $\widetilde Y$.  We will show below that $S\neq T$.  Let
  $g\in\pi_1 Y$ be the nontrivial element such that $g S=T$. Both $gf$
  and $g^{-1}f$ lie in $\text{image}(D\rightarrow \widetilde Y)$. This
  contradicts the maximality of $f$ since either $gf>f$ or
  $g^{-1}f>f$.

  We now reach a contradiction if $S= T$.  Consider the edges mapping
  to $f$ in a lift $\boundary_\p R\rightarrow \widetilde Y$.  If there
  is only one such edge, then $S',T'$ form a cancelable pair, and so
  the minimality of $D$ is violated.  If two of these edges are
  oriented in the same way around $\boundary_\p R$, then the subpath
  joining their initial vertices violates
  Lemma~\ref{lem:Howie's Weinbaum generalization}.  Thus exactly two edges of $\boundary_\p
  R$ map to $f$ so $\boundary_\p R\rightarrow \widetilde Y$ is of the
  form $\sigma f \sigma' f^{-1}$.  Moreover, projecting to $Y$, we
  find that $\sigma$ cannot traverse $e$, for it would violate
  Lemma~\ref{lem:Howie's Weinbaum generalization} and likewise for
  $\sigma'$. We conclude that $\boundary_\p R$ is of the form $\sigma
  e \sigma' e^{-1}$ where $\sigma$ and $\sigma'$ are closed essential
  paths in $X$.
  However, in this case, $\pi_1Y$ splits as an HNN extension or
  amalgamated product (along $\langle \sigma \rangle$), depending on
  whether or not $X$ is connected, and hence $X\to Y$ is
  $\pi_1$-injective on each component.
\end{proof}

\begin{thm}\label{thm:enlargement slim}
Let $Y$ be a simple enlargement of $X$.
Then $Y$ is slim if each component of $X$ is slim.
\end{thm}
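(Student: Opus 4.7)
The plan is to handle the two types of simple enlargement separately, transporting the slim preorder from the components $X_i$ of $X$ to $\widetilde Y$ while equipping the lifts of $e$ (and in the $(R,e)$-case, the new $2$-cells) with additional structure. For an $e$-enlargement $Y=X\cup e$, no new $2$-cells appear and $\pi_1$-injection of each component $X_i$ into $Y$ is automatic, so each connected component of the preimage of $X_i$ in $\widetilde Y$ is an isomorphic copy of $\widetilde X_i$. I would transport the slim preorder onto each such copy and declare every lift of $e$ strictly above every other edge; the three slim axioms are then inherited from the $X_i$ since every $2$-cell of $Y$ lies in $X$. For an $(R,e)$-enlargement, Lemma~\ref{lem:redinject} (under the standing hypothesis that $\pi_1 Y$ is left-orderable) provides the analogous $\pi_1$-injectivity. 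Fixing a left-ordering $<$ of $\pi_1 Y$ and a distinguished lift $\tilde e_0$ of $e$, I extend the preorder as before but additionally totally order the lifts of $e$ by $g\tilde e_0\prec h\tilde e_0\Leftrightarrow g<h$, and for each lift $\widetilde R$ of $R$ I declare $e^+_{\widetilde R}$ to be the strictly maximum lift of $e$ in $\partial\widetilde R$. The $\pi_1 Y$-invariance of this preorder reduces to invariance on each lifted component and left-invariance of $<$.

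The key step is verifying axiom~(\ref{slim:trav once}) for the new $2$-cells: that $\partial_\p\widetilde R$ traverses $e^+_{\widetilde R}$ exactly once. Suppose it were traversed twice. If both traversals were in the same direction, a cyclic decomposition of $\partial_\p\widetilde R$ would descend to $\partial_\p R=P_1P_2$ with each $P_i$ a closed path in $Y$ traversing $e$ that lifts to a loop in $\widetilde Y$; each $P_i$ would then be trivial in $\pi_1 Y$, contradicting Lemma~\ref{lem:Howie's Weinbaum generalization}. If the traversals were in opposite directions, the intervening subpath would project to a loop $\alpha'$ in some component $X_i$; the enlargement condition that $\partial_\p R$ traverse $e$ minimally forces $\alpha'$ to be essential in $\pi_1(X\cup e)$, hence in $\pi_1 X_i$, and by Lemma~\ref{lem:redinject} also in $\pi_1 Y$, so the two lifts of $e$ are actually distinct edges of $\widetilde Y$ and $e^+_{\widetilde R}$ is traversed only once after all.

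Axiom~(\ref{slim:distinct}) would then be dispatched by a four-case check: old-old pairs use the slim structure on each $X_i$ (and are vacuous when the two cells lie in different lifted copies of components); if $\widetilde R_1$ is old and $\widetilde R_2$ is new, $e^+_{\widetilde R_1}$ lies in $X$ so is below any lift of $e$; a new $\widetilde R_1$ versus an old $\widetilde R_2$ is vacuous since $\partial\widetilde R_2$ contains no lift of $e$; and for two new $2$-cells, distinct lifts of $R$ have distinct $e^+$'s because $\pi_1 Y$ acts freely on edges, so strict maximality of $e^+_{\widetilde R_2}$ under the total order on lifts of $e$ completes the case. The main obstacle I expect is the two-direction verification of~(\ref{slim:trav once}), especially the opposite-direction case which requires coordinating Lemmas~\ref{lem:Howie's Weinbaum generalization} and~\ref{lem:redinject}; a preliminary concern is securing left-orderability of $\pi_1 Y$, which may require an induction on the construction of $Y$ or a separate argument beyond the slim hypothesis on $X$.
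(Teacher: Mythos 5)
Your construction of the preorder on $\edges(\widetilde Y)$ is identical to the paper's (edges over $e$ declared above edges over $X$, left-ordering on $\pi_1 Y$ used to totally order the fiber over $e$, and the original slim preorder retained inside each lifted copy of a component), and your verification of axioms~\eqref{slim:trav once} and~\eqref{slim:distinct} via Lemma~\ref{lem:Howie's Weinbaum generalization} together with the $\pi_1$-injectivity from Lemma~\ref{lem:redinject} matches what the paper leaves implicit. Your two-direction argument for axiom~\eqref{slim:trav once} is correct: the same-direction case contradicts essentiality of the $P_i$, and in the opposite-direction case the intervening loop is essential in $X\cup e$ (else $\boundary_\p R$ could be homotoped to cross $e$ fewer times), hence essential in the component of $X$ it lies in, hence essential in $Y$ by Lemma~\ref{lem:redinject}, so the two lifts of $e$ are distinct.

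The one real gap is exactly the one you flagged at the end: you invoke a left-ordering of $\pi_1 Y$ (both directly, to order the fiber over $e$, and indirectly, since Lemma~\ref{lem:redinject} presupposes it), but you do not establish it. The paper supplies this with a short chain of citations you would need to reproduce: each component of $X$, being slim, has nonpositive immersions by Corollary~\ref{cor:slim nonpositive immersions}; a $2$-complex with nonpositive immersions has locally indicable $\pi_1$ by \cite{WiseNonPositiveCoherence02}; a \emph{simple} enlargement of a $2$-complex with locally indicable fundamental group again has locally indicable fundamental group by \cite[Cor~4.2]{Howie82}; and locally indicable groups are left-orderable by \cite{BurnsHale72}. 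Without this step you have a circularity, since the existence of the preorder you build depends on the ordering whose existence you are trying to deduce from slimness. Note that this is not a cosmetic issue: for an $(R,e)$-enlargement, there is no elementary way to order the lifts of $e$ compatibly with the $\pi_1 Y$-action other than by appealing to a left-order on $\pi_1 Y$, and slimness of the $X_i$ alone does not hand you one directly (the slim preorder lives on edges of $\widetilde X_i$, not on the group).

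Two smaller points. First, for the $e$-enlargement case you do not actually need to order the lifts of $e$ at all, since no $2$-cell's boundary crosses $e$; declaring all lifts of $e$ mutually equivalent (and above everything else) already gives a valid preorder and all axioms are inherited, so that case genuinely needs less than you set up. Second, when you say $\pi_1$-injectivity of each component into $Y$ is ``automatic'' for an $e$-enlargement, that is correct (free or HNN splitting over the trivial group), but it is worth stating since it is what guarantees each component of the preimage of $X_i$ in $\widetilde Y$ is a copy of $\widetilde X_i$, which your case analysis of axiom~\eqref{slim:distinct} silently uses.
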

\begin{proof}
  As each component of $X$ is slim, it has nonpositive immersions by
  Corollary~\ref{cor:slim nonpositive immersions}. And hence has
  locally indicable $\pi_1$ by \cite{WiseNonPositiveCoherence02}.
  As the enlargement is \emph{simple}, $\pi_1Y$ is locally
  indicable \cite[Cor~4.2]{Howie82} and thus has a left-ordering $<$.
  For an $(R,e)$-enlargement, any proper nontrivial subpath of $\boundary_\p R$ which starts and
  ends at an initial vertex of $e$ is essential in $\pi_1Y$ by
  Lemma~\ref{lem:Howie's Weinbaum generalization} and our assumption
  that $\boundary_\p R$ cannot be homotoped to traverse $e$ fewer
  times.  Furthermore, as $X\rightarrow Y$ is $\pi_1$-injective on
  each component by Lemma~\ref{lem:redinject}, we see that any subpath
  of $\boundary_\p R$ that starts and ends at an initial vertex of $e$
  is essential in $\pi_1Y$.

  To see that $Y$ is slim, we declare a $\pi_1Y$-invariant preorder on
  the edges of $\widetilde Y$ as follows: $\tilde c_1\preceq\tilde
  c_2$ if either:
  \begin{enumerate}
  \item $\tilde c_1$ maps to $X$ and $\tilde c_2$ maps to $e$.
  \item $\tilde c_1=g\tilde c_2$ with $g<1_{\pi_1Y}$ and both $\tilde
    c_1$ and $\tilde c_2$ map to $e$.
  \item $\tilde c_1,\tilde c_2$ lie in the same component of the
    pre-image of $X$ and $\tilde c_1\preceq\tilde c_2$ with respect to
    the slim structure on $X$.\qedhere
  \end{enumerate}
\end{proof}

\begin{defn}[Reducible]
  A $2$-complex $X=\cup_{i=0}^m X_i$ is \emph{$[$simply$]$ reducible} if
  $X_0$ is a vertex, and $X_{i+1}$ is a $[$simple$]$ enlargement of
  $X_i$ for each $i\geq 0$.  We allow $m=\infty$.
\end{defn}
Howie's original definition of reducible $2$-complex is a bit more general, as he imposes the laxer
  requirement that for an $(R,e)$-enlargement,  $\boundary_\p R$ is not homotopic
in $X\cup e$ to a path not traversing $e$. However, any $2$-complex satisfying Howie's definition
has the homotopy type of a reduced $2$-complex in the above sense.

\begin{cor}\label{cor:reduced slim}
Every simply reducible $2$-complex is slim.
\end{cor}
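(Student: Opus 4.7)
The plan is to induct on the stage $i$, deducing slimness of each $X_i$ from Theorem~\ref{thm:enlargement slim}, and then, in the case $m=\infty$, assemble the finite-stage slim structures into a slim structure on the union $X=\bigcup_{i=0}^\infty X_i$.

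For the base case $i=0$, the complex $X_0$ is a single vertex, which has no edges or $2$-cells, so the three conditions of Definition~\ref{defn:slim} are vacuously satisfied. For the inductive step, observe that since we begin with the vertex $X_0$ and attach at most one new anchored $1$-cell and one new $2$-cell at each stage, every $X_i$ is connected; hence ``each component of $X_i$ is slim'' reduces to ``$X_i$ is slim.'' Assuming $X_i$ is slim, Theorem~\ref{thm:enlargement slim} yields that $X_{i+1}$ is slim, disposing of the case $m<\infty$.

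For $m=\infty$, set $X=\bigcup_i X_i$. By Lemma~\ref{lem:redinject} each inclusion $X_i\hookrightarrow X_{i+1}$ is $\pi_1$-injective, so any finitely generated subgroup of $\pi_1 X$ embeds in some $\pi_1 X_i$; since each such $X_i$ is slim and therefore has locally indicable $\pi_1$ (via Corollary~\ref{cor:slim nonpositive immersions} together with \cite{WiseNonPositiveCoherence02}), so does $\pi_1 X$. Hence $\pi_1 X$ admits a left-ordering $<$, which restricts to a coherent family of left-orderings on the subgroups $\pi_1 X_i$. Running the construction of Theorem~\ref{thm:enlargement slim} with these restricted orderings ensures that the preorder placed on $\widetilde{X_{i+1}}$ restricts to the preorder previously defined on $\widetilde{X_i}\subset\widetilde{X_{i+1}}$: the first two clauses of that construction concern new edges lying above $e$ and hence are irrelevant on $\widetilde{X_i}$, while the third clause is defined precisely by transporting the slim preorder on $X_i$ onto the component of the preimage of $X_i$ containing the basepoint, namely $\widetilde{X_i}$ itself under our identification.

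Taking the direct limit then yields a $\pi_1 X$-invariant preorder on $\widetilde X=\bigcup_i \widetilde{X_i}$. Conditions~\eqref{slim:trav once} and~\eqref{slim:distinct} at a $2$-cell $\widetilde R$ of $\widetilde X$ involve only finitely many edges adjacent to $\widetilde R$ and hold at any sufficiently large finite stage containing them. The chief technical point is the coherence of the tower of left-orderings, secured here by building the ordering top-down from $\pi_1 X$; the remaining verifications are then routine.
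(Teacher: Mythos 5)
Your argument is correct and mirrors the paper's: induct on $i$ using Theorem~\ref{thm:enlargement slim}, then observe that the preorders on the $\widetilde{X_i}$ agree across stages so that the case $m=\infty$ goes through. The only divergence is that you take care to manufacture a global left-ordering on $\pi_1 X$ (via $\pi_1$-injectivity of the tower and local indicability of each $\pi_1 X_i$) so that the orderings used at the individual stages are nested; this is sound but not actually needed, because in the construction of Theorem~\ref{thm:enlargement slim} the left-ordering on $\pi_1 X_{i+1}$ is used only to order the lifts of the new edge $e_{i+1}$, while the relation among edges lying over $X_i$ is, by clause (3) of that construction, inherited directly from the slim structure already placed on $X_i$ --- so the restriction of the $\widetilde{X_{i+1}}$-preorder to a copy of $\widetilde{X_i}$ coincides with the $\widetilde{X_i}$-preorder automatically, irrespective of which left-orderings are chosen at each stage.
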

\begin{proof}This follows by induction  from Theorem~\ref{thm:enlargement slim}.
Note that the preorder on the edges in copies of $\widetilde X_i$ are in agreement with the preorder
 on $\widetilde X_{i+1}$ for each $i$, and hence the case $m=\infty$ holds as well.
 \end{proof}

An $(R,e)$-enlargement $X_{i+1}$ of $X_i$ is \emph{tight} if each subpath of $\boundary_\p R$ mapping to $X_i$ lifts to an embedding in $\widetilde X_i$.
And $X$ is a \emph{tight} reducible complex if each $(R,e)$-enlargement $X_i\subset X_{i+1}$ is tight.
The following is then a consequence of Lemma~\ref{lem:redinject}:
 \begin{cor}\label{cor:tight}
If $X$ is a tight reducible complex, then the boundary path of each $2$-cell embeds in $\widetilde X$.
\end{cor}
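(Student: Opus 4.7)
The plan is to argue by contradiction. Fix a $2$-cell $R$ added as an $(R,e)$-enlargement $X_i\subset X_{i+1}$, and suppose the lift $\boundary_\p\widetilde R$ fails to embed in $\widetilde X$. A self-intersection at two distinct parameters splits $\boundary_\p R$ into two proper sub-arcs $\alpha,\beta$, each a closed loop in $X$ whose lift closes up in $\widetilde X$; in particular $[\alpha]=[\beta]=1$ in $\pi_1 X$. Since $\boundary_\p R$ traverses $e$ by the definition of an $(R,e)$-enlargement, at least one of $\alpha,\beta$ does.

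If both $\alpha$ and $\beta$ traverse $e$, then Lemma~\ref{lem:Howie's Weinbaum generalization} immediately implies that each is essential in $\pi_1 X_{i+1}$. Otherwise, without loss of generality $\beta$ avoids $e$, so $\beta$ is a closed sub-path of $\boundary_\p R$ lying entirely in $X_i$; tightness then forces its lift to embed in $\widetilde{X_i}$, and since $\beta$ is a nontrivial closed sub-path, its embedded lift has distinct endpoints, so $\beta$ is essential in $\pi_1 X_i$. In either case, I then propagate essentiality to $\pi_1 X$ by iteratively applying Lemma~\ref{lem:redinject} along the chain $X_i\subset X_{i+1}\subset\cdots\subset X$, each stage yielding $\pi_1$-injectivity on each component. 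The resulting essentiality of $\alpha$ (in the first case) or $\beta$ (in the second) in $\pi_1 X$ contradicts $[\alpha]=1$ (respectively $[\beta]=1$).

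The main obstacle is verifying the left-orderability hypothesis of Lemma~\ref{lem:redinject} at every stage of the iteration. In the simply reducible setting this follows from Howie's local indicability theorem \cite{Howie82} combined with the Burns--Hale construction, exactly as used in the proof of Theorem~\ref{thm:enlargement slim}; thus the corollary is naturally read with the implicit assumption that each $(R,e)$-enlargement is also simple, which is the usual setting in which tight reducibility is invoked.
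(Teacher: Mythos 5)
The proposal is correct in substance and follows the same route the paper has in mind: the paper offers no written proof beyond the one-line pointer ``The following is then a consequence of Lemma~\ref{lem:redinject},'' and your argument is precisely the natural unpacking of that pointer. Your split of a failed self-intersection into two closed proper sub-arcs $\alpha,\beta$, handling the ``both cross $e$'' case via Lemma~\ref{lem:Howie's Weinbaum generalization} and the ``one avoids $e$'' case via tightness, and then transporting essentiality up the filtration $X_i\subset X_{i+1}\subset\cdots\subset X$ by iterated applications of Lemma~\ref{lem:redinject}, is exactly what the text is gesturing at.

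Two small remarks. First, in your second case you could avoid the (minor) care about what ``embeds'' means for a closed lift by noting that once $\beta$ is essential in $\pi_1X_i$, hence in $\pi_1X$ by injectivity, $\alpha$ is automatically essential as well since $[\alpha][\beta]=[\boundary_\p R]=1$; so only one of the two loops ever needs to be examined directly. Second, your observation about the hypotheses of Lemma~\ref{lem:redinject} is a legitimate and worthwhile one: the corollary as stated says only ``tight reducible,'' but the propagation step requires left-orderability of $\pi_1X_{j}$ for each intermediate stage, which is guaranteed when the reducible structure is simple (via Howie's local indicability and Burns--Hale). This is indeed the setting in which the paper invokes the corollary (e.g., in the proof of Proposition~\ref{prop:staggered is bislim}, where $X$ is staggered with no proper-power relators), so the implicit hypothesis you flag is consistent with the paper's intent, but it is genuinely left tacit in the statement.
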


\section{Connection to the Strengthened Hanna Neumann Theorem}\label{sec:shnc}
In this section, we describe the connection of
Theorem~\ref{thm:word main} to the Strengthened Hanna Neumann Theorem.

\begin{defn}[Fiber product]
  Let $\Gamma_1,\Gamma_2$ be labeled digraphs. Their \emph{fiber product}
  is the labeled digraph whose vertices are pairs $(v_1,v_2)$ of vertices in $\Gamma_1,\Gamma_2$,
  and whose edges are pairs $(e_1,e_2)$ of edges in $\Gamma_1,\Gamma_2$ with the same label.
  The initial and terminal vertex of $(e_1,e_2)$ are $(u_1,u_2)$ and $(v_1,v_2)$ where
  $u_i,v_i$ are the initial and terminal vertices of $e_i$ in $\Gamma_i$.
\end{defn}

Fiber products were popularized in combinatorial group theory by Stallings
\cite{Stallings83}. The same construction, phrased in the
language of finite state automata, was widely used in computer science
to compute the intersection of regular languages.

\begin{defn}
  The \emph{reduced rank} of a graph $K$ is
  $\redrank(K) =  \ \max\{\beta_1(K)-1,\ 0\}$.
\end{defn}

The Strengthened Hanna Neumann Theorem is equivalent to the following
inequality, which was first stated explicitly by Walter Neumann in
\cite{WNeumann89}. It is this statement that was proven in
\cite{Friedman2015,Mineyev2012}.

\begin{thm}\label{thm:shnc}
  Let $\Gamma_1,\Gamma_2$ be connected deterministically
  labeled digraphs. Then:
\begin{equation}\label{eq:fiber product SHN}
  \sum_{K\in Components(\Gamma_1\otimes\Gamma_2)}\hspace{-1.3cm}
  \redrank(K)\ \ \le \ \ \redrank(\Gamma_1) \cdot \redrank(\Gamma_2)
  \end{equation}
\end{thm}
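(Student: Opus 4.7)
My plan is to recognize Theorem~\ref{thm:shnc} as the graph-theoretic form of the Strengthened Hanna Neumann inequality, from which it follows by the proofs in \cite{Friedman2015, Mineyev2012, DicksOnePageProof2011}.

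First, by folding each $\Gamma_i$ (successively identifying pairs of like-labeled edges sharing an endpoint) and reducing appropriately, I may assume each $\Gamma_i$ is the Stallings graph of a finitely generated subgroup $H_i$ of the free group $F$ on the given alphabet. A choice of basepoint identifies $\pi_1 \Gamma_i$ with $H_i$, and then $\redrank(\Gamma_i) = \max(\rank(H_i) - 1, 0)$, which I will also write as $\redrank(H_i)$.

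Next I would establish the standard dictionary: components of $\Gamma_1 \otimes \Gamma_2$ are in canonical bijection with the double cosets in $H_1 \backslash F / H_2$, and the fundamental group of the component $K_g$ corresponding to the class of $g$ is conjugate in $F$ to $H_1 \cap g H_2 g^{-1}$. This is immediate from Stallings's theory: the universal cover of a bouquet with the given alphabet is the Cayley tree of $F$, inside which the covers determined by $\Gamma_1$ and $\Gamma_2$ intersect in pieces indexed by double cosets, and these are precisely modeled by the fiber product. Substituting into \eqref{eq:fiber product SHN}, the inequality becomes
\[
\sum_{H_1 g H_2 \in H_1 \backslash F / H_2} \redrank(H_1 \cap g H_2 g^{-1}) \;\le\; \redrank(H_1)\cdot\redrank(H_2),
\]
which is the Strengthened Hanna Neumann inequality in its classical form.

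The main obstacle is precisely this classical inequality, which I would invoke as a black box via the cited references. A direct widge-style argument in the spirit of this paper does not obviously generalize: Theorem~\ref{thm:word main} counts equivalence classes of a single cyclic word $w$, whereas here one would need to control closed paths of variable and potentially unbounded length within each component $K_g$, and a single global preorder on the edges of the ambient bouquet does not yield adequate combinatorial control. Indeed, the arguments in this paper are already inspired by Dicks's one-page distillation of Mineyev's proof, and reproving the full SHN in the language of this paper would essentially require importing Mineyev's orderability-based argument wholesale.
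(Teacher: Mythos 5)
Your proposal is correct and takes essentially the same approach as the paper: the paper gives no independent proof but cites this as precisely the statement established in \cite{Friedman2015,Mineyev2012} (and \cite{DicksOnePageProof2011}), and you do the same. The only difference is that you detour through the Stallings-graph/double-coset dictionary to reach the subgroup-intersection form of SHN, whereas the paper notes that the fiber-product form of the inequality itself (due to Walter Neumann \cite{WNeumann89}) is what was directly proven in those references, making the reduction unnecessary.
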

Letting $\Gamma_1$ be the cycle labeled by a cyclically reduced word
$w$ which is not a proper power, and letting $\Gamma_2$ be arbitrary,
Theorem~\ref{thm:word main} can be restated as
\[
\sum_{\substack{K\in Components(\Gamma_1\otimes\Gamma_2)}}\hspace{-1.3cm}
\beta_1(K)\ \ \le \ \ \beta_1(\Gamma_1)\cdot \beta_1(\Gamma_2)
\]
\begin{figure}
  \centering
  \includegraphics[width=.35\textwidth]{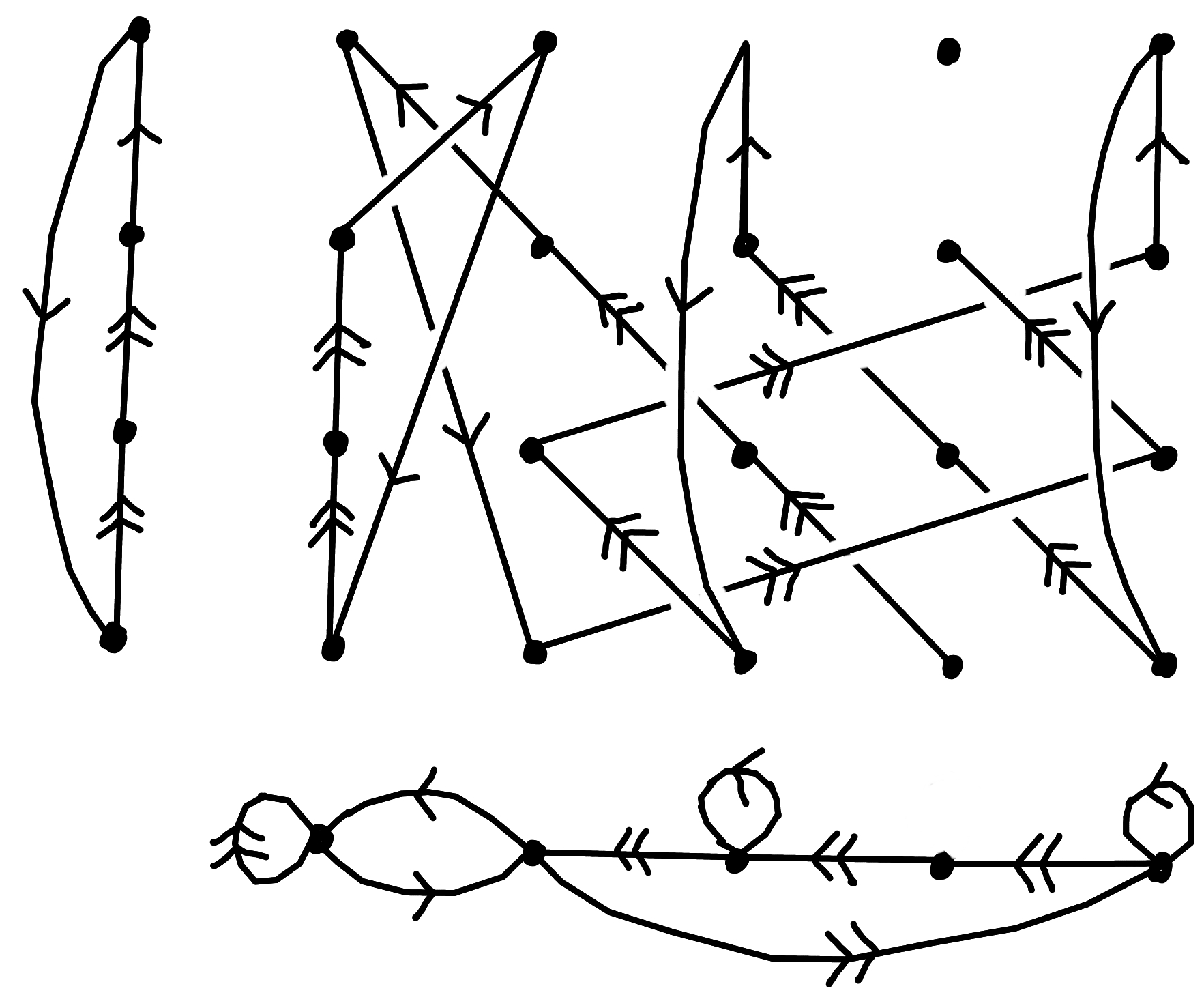}
  \caption{\label{fig:fiberproduct} The above fiber product of a graph and a circle contains an arc, an isolated vertex, and two cycles with multiplicities one and two.}
\end{figure}

Unfortunately, Theorem~\ref{thm:shnc} sheds no light on
Theorem~\ref{thm:word main}, since $\redrank(\Gamma_1)=0$ when $\Gamma_1$ is a cycle,
 and as illustrated in Figure~\ref{fig:fiberproduct},
 each component $K$ of $\Gamma_1\otimes\Gamma_2$ has $\redrank(K)=0$,
 so Equation~\eqref{eq:fiber product SHN} becomes $0\le0$.
Nevertheless, this connection was exploited in~\cite{WiseHNCoherence} to obtain a
partial result by choosing $\Gamma_1$ appropriately related to $w$ so that no component $K$ has $\euler(K)=0$.

In parallel to the statement of the (Strengthened) Hanna Neumann Theorem which was originally formulated in terms of intersections of subgroups of a free group, we have the following:
\begin{cor}[Restatement of Theorem~\ref{thm:word main}]\label{cor:algebraic version}
  Let $H$ be a finitely generated subgroup of a free group $F$. Let
  $Z\subset F$ be a maximal cyclic subgroup. Then the number of
  distinct conjugates of $Z$ that intersect $H$ nontrivially is
  bounded by $\rank(H)$.
\end{cor}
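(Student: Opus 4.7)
The plan is to translate the algebraic statement into the graph-theoretic language of Theorem~\ref{thm:word main} via Stallings foldings. I would let $X^1$ be the rose corresponding to a free basis of $F$ and apply Stallings' construction to produce a finite, connected, deterministically labeled digraph $\Gamma$ with an immersion $\Gamma \to X^1$ and basepoint $v_0$, such that $\pi_1(\Gamma, v_0) = H$ and $\beta_1(\Gamma) = \rank(H)$. Then I would pick a generator $w$ of $Z$; after replacing $Z$ by a conjugate (which does not alter the set of $F$-conjugates of $Z$) we may assume $w$ is cyclically reduced, and since $Z$ is maximal cyclic, $w$ is not a proper power. Thus $w$ is a reduced simple word in the alphabet of $X^1$, and Theorem~\ref{thm:word main} will give $\overline \#_w(\Gamma) \le \beta_1(\Gamma) = \rank(H)$.

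It remains to identify $\overline \#_w(\Gamma)$ with the stated count. The phrase ``distinct conjugates'' should be read as $H$-conjugacy classes of $F$-conjugates of $Z$---equivalently, distinct double cosets $HgZ$ in $H \backslash F/Z$ with $gZg^{-1} \cap H \ne 1$---since the naive count of conjugate subgroups of $F$ would typically be infinite (for instance when $H = F$). I plan to build the bijection by sending a $w$-cycle at a vertex $v$ to the $H$-conjugacy class of $\alpha_v Z \alpha_v^{-1}$, where $\alpha_v$ is the label of any path in $\Gamma$ from $v_0$ to $v$. Two such labels differ by an element of $H$, so this class is well defined. For surjectivity, if $gw^n g^{-1} \in H$ with $n > 0$ then the corresponding loop at $v_0$ in $\Gamma$ decomposes as $\alpha \cdot w^n \cdot \alpha^{-1}$, and the middle $w^n$-segment is a $w$-cycle whose associated conjugate is $gZg^{-1}$.

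The main content---and the main obstacle---is injectivity, matched to the equivalence relation on $w$-cycles. Two $w$-cycles at $v, v'$ yield $H$-conjugate conjugates iff there exists $h \in H$ with $\alpha_{v'}^{-1} h \alpha_v \in N_F(Z)$, so everything reduces to the normalizer identity $N_F(Z) = Z$ for $Z$ maximal cyclic in the free group $F$. This rests on two standard facts: the centralizer of $w$ equals $\langle w \rangle$ since $w$ is not a proper power, and no nontrivial element of a free group is conjugate to its inverse (by a translation-length argument on the Cayley tree). Granted $N_F(Z) = Z$, the $H$-conjugacy condition becomes $h = \alpha_{v'} w^m \alpha_v^{-1}$ for some $m \in \integers$, and $h \in H$ holds iff this word lifts to a loop at $v_0$ in $\Gamma$, hence iff there is a $w^{|m|}$-labeled path between $v$ and $v'$---precisely the equivalence of $w$-cycles. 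Theorem~\ref{thm:word main} then supplies the bound $\rank(H)$.
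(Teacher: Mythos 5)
Your proof takes the same route as the paper: pass to the Stallings core $\Gamma$ for $H$, translate conjugates of $Z$ meeting $H$ into equivalence classes of $w$-cycles, and invoke Theorem~\ref{thm:word main}. What you add is a correct and genuinely important observation that the paper elides: read literally, the statement is false. For $H=F$ of rank at least two, every conjugate of $Z$ meets $H=F$, and since $N_F(Z)=Z$ there are infinitely many such conjugates, while $\rank(H)$ is finite. The quantity the construction actually bounds, and the one the paper's proof supports, is the number of $H$-conjugacy classes of such conjugates, equivalently the number of double cosets in $H\backslash F/Z$ whose corresponding conjugate meets $H$ nontrivially. The reason is the one you isolate: a path label $\alpha_v$ from the basepoint to a vertex $v$ is only determined up to left multiplication by $H$, so a $w$-cycle at $v$ determines $\alpha_v Z\alpha_v^{-1}$ only up to $H$-conjugacy; the paper's assertion that equivalent $w$-cycles ``correspond to the same conjugate of $Z$'' must be read in that sense. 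Your injectivity argument via $N_F(Z)=Z$ (centralizer of a non-proper-power is cyclic, plus no nontrivial element of a free group is conjugate to its inverse) supplies exactly the details the paper compresses into a single clause, and it is correct. For the record, the paper's subsequent corollary on $\bigcap_i g_i^{-1}Hg_i$ still goes through under this reading, because distinct cosets $Hg_i$ with $w\in g_i^{-1}Hg_i$ automatically lie in distinct double cosets $Hg_iZ$.
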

\begin{proof}
Let $F=\pi_1B$ where $B$ is a bouquet of circles. Let $\widehat B\rightarrow B$ be the based covering space with $\pi_1\widehat B=H$. Let $\Gamma\subset \widehat B$ be a finite connected based subgraph.
 Direct and label the edges of $B$, and pull this back so $\Gamma$ is a finite deterministically labeled digraph.
 Let $Z=\langle w\rangle$, where we may assume without loss of generality that $w$ is cyclically reduced.
Each conjugate of $Z$ that intersects $H$ nontrivially corresponds to a closed lift of some power $w^n$ of the path $w\rightarrow B$ at some vertex of $\Gamma$, and hence to a based $w$-cycle. Two based $w$-cycles in the same equivalence class correspond to vertices connected by a lift of a path $w^k$, and hence to the same conjugate of $Z$. The bound holds by Theorem~\ref{thm:word main}.
\end{proof}

\begin{cor}
  Let $H$ be a finitely generated subgroup of a free group $F$.
  Suppose $H$   is \emph{isolated} in the sense that $h^p\in F$ implies $h\in F$
  for any $h\in F$ and $p>0$. Suppose
  $Hg_1,\ldots,Hg_n$ are distinct cosets with $n> \rank(H)$. Then:
    \[
  \bigcap_{i=1}^ng_i^{-1}Hg_i = \{1_F\}
  \]
\end{cor}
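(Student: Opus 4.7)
The plan is to argue by contradiction: assuming that $\bigcap_{i=1}^n g_i^{-1}Hg_i$ contains a nontrivial element $x$, I will produce $n$ distinct conjugates of a maximal cyclic subgroup of $F$, each meeting $H$ nontrivially, and invoke Corollary~\ref{cor:algebraic version} to get $n\le \rank(H)$, contradicting the hypothesis $n>\rank(H)$.

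First, let $w\in F$ be the unique primitive element (not a proper power in $F$) for which $x=w^k$ with $k\neq 0$, and set $Z=\langle w\rangle$, which is a maximal cyclic subgroup of $F$. For each $i$, the hypothesis $x\in g_i^{-1}Hg_i$ gives $(g_iwg_i^{-1})^k=g_ixg_i^{-1}\in H$, and isolation of $H$ upgrades this to $g_iwg_i^{-1}\in H$. In particular, each conjugate $g_iZg_i^{-1}$ meets $H$ nontrivially.

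Next, I would show that the conjugates $g_1Zg_1^{-1},\dots,g_nZg_n^{-1}$ are pairwise distinct. If $g_iZg_i^{-1}=g_jZg_j^{-1}$, then $g_j^{-1}g_i$ normalizes $Z$. In a free group, the normalizer of a maximal cyclic subgroup $\langle w\rangle$ coincides with $\langle w\rangle$: any normalizing element $g$ satisfies $gwg^{-1}=w^{\pm1}$, and in the $w^{-1}$ case $g^2$ would centralize $w$, hence $g^2\in\langle w\rangle$, which (since $w$ is primitive) forces $g\in\langle w\rangle$ and then $gwg^{-1}=w\ne w^{-1}$, a contradiction. So $g_j^{-1}g_i=w^m$ for some $m$, that is, $g_i=g_jw^m$. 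Conjugating the relation $g_jwg_j^{-1}\in H$ from the previous paragraph yields $g_jw^mg_j^{-1}\in H$, hence $g_ig_j^{-1}\in H$ and $Hg_i=Hg_j$, so $i=j$.

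The result then follows by applying Corollary~\ref{cor:algebraic version} to $Z$, which bounds the number of conjugates of $Z$ meeting $H$ nontrivially by $\rank(H)$. The main obstacle is the normalizer computation in the second step: the rest is bookkeeping, but translating Corollary~\ref{cor:algebraic version} into a statement about cosets of $H$ requires both the isolatedness of $H$ (to pass from $x=w^k$ back to $w$) and the rigidity of centralizers/normalizers of primitive elements in free groups (to ensure distinct cosets $Hg_i$ produce distinct conjugates $g_iZg_i^{-1}$).
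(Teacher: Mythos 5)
Your proof is correct and follows essentially the same route as the paper's: take a nontrivial element of the intersection, pass to its primitive root $w$ using isolatedness, set $Z=\langle w\rangle$, check the conjugates $g_iZg_i^{-1}$ all meet $H$ nontrivially and are pairwise distinct, and then invoke Corollary~\ref{cor:algebraic version}. The paper is terser at two spots that you fill in carefully: it simply says ``$Z$ is malnormal and so the conjugates are distinct'' where you both derive $N_F(Z)=Z$ from scratch and spell out the step that equality of conjugates, together with $g_jwg_j^{-1}\in H$, forces equality of the cosets $Hg_i=Hg_j$.
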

\begin{proof}
Consider a nontrivial element $w\in \bigcap_{i=1}^ng_i^{-1}Hg_i$.
As $H$ is isolated, we may assume that $w$ is not a proper power.
Let $Z=\langle w\rangle$, and apply Corollary~\ref{cor:algebraic version} to
$\{g_iZg_i^{-1} : 1\leq i\leq n \}$.
Note that the maximal cyclic subgroup $Z$ is malnormal and so the conjugates are distinct.
  \end{proof}

{\bf Acknowledgment:} We are grateful to the referees for
many helpful corrections.

\bibliographystyle{alpha}

\bibliography{wise}

\def\cprime{$'$} \def\polhk#1{\setbox0=\hbox{#1}{\ooalign{\hidewidth
  \lower1.5ex\hbox{`}\hidewidth\crcr\unhbox0}}} \def\cprime{$'$}
  \def\cprime{$'$} \def\polhk#1{\setbox0=\hbox{#1}{\ooalign{\hidewidth
  \lower1.5ex\hbox{`}\hidewidth\crcr\unhbox0}}}
\begin{thebibliography}{How82}

\bibitem[BH72]{BurnsHale72}
R.~G. Burns and V.~W.~D. Hale.
\newblock A note on group rings of certain torsion-free groups.
\newblock {\em Canad. Math. Bull.}, 15:441--445, 1972.

\bibitem[DH91]{DuncanHowie91}
Andrew~J. Duncan and James Howie.
\newblock The genus problem for one-relator products of locally indicable
  groups.
\newblock {\em Math. Z.}, 208(2):225--237, 1991.

\bibitem[Dic11]{DicksOnePageProof2011}
Warren Dicks.
\newblock Simplified {M}ineyev.
\newblock \url{http://mat.uab.es/~dicks/SimplifiedMineyev.pdf}, 2011.

\bibitem[Fri15]{Friedman2015}
Joel Friedman.
\newblock Sheaves on graphs, their homological invariants, and a proof of the
  {H}anna {N}eumann conjecture: with an appendix by {W}arren {D}icks.
\newblock {\em Mem. Amer. Math. Soc.}, 233(1100):xii+106, 2015.

\bibitem[How82]{Howie82}
James Howie.
\newblock On locally indicable groups.
\newblock {\em Math. Z.}, 180(4):445--461, 1982.

\bibitem[LW14]{LouderWilton2014}
Lars Louder and Henry Wilton.
\newblock Stackings and the {$W$}-cycles conjecture.
\newblock pages 1--9, 2014.

\bibitem[Min12]{Mineyev2012}
Igor Mineyev.
\newblock Submultiplicativity and the {H}anna {N}eumann conjecture.
\newblock {\em Ann. of Math. (2)}, 175(1):393--414, 2012.

\bibitem[Neu90]{WNeumann89}
Walter~D. Neumann.
\newblock On intersections of finitely generated subgroups of free groups.
\newblock In {\em Groups---Canberra 1989}, pages 161--170. Springer, Berlin,
  1990.

\bibitem[Sta83]{Stallings83}
John~R. Stallings.
\newblock Topology of finite graphs.
\newblock {\em Invent. Math.}, 71(3):551--565, 1983.

\bibitem[Wei72]{Weinbaum72}
C.~M. Weinbaum.
\newblock On relators and diagrams for groups with one defining relation.
\newblock {\em Illinois J. Math.}, 16:308--322, 1972.

\bibitem[Wisa]{WiseNonPositiveCoherence02}
Daniel~T. Wise.
\newblock Coherence, local-indicability, and nonpositive immersions.
\newblock Available at \url{http://www.math.mcgill.ca/wise/papers}.
\newblock Preprint.

\bibitem[Wisb]{WisePositiveRelatorCoherence02}
Daniel~T. Wise.
\newblock Positive one-relator groups are coherent.
\newblock Available at \url{http://www.math.mcgill.ca/wise/papers}.
\newblock pp. 1-19.

\bibitem[Wis03]{WiseAnnouncement03}
Daniel~T. Wise.
\newblock Nonpositive immersions, sectional curvature, and subgroup properties.
\newblock {\em Electron. Res. Announc. Amer. Math. Soc.}, 9:1--9 (electronic),
  2003.

\bibitem[Wis06]{WiseHNCoherence}
Daniel~T. Wise.
\newblock The coherence of one-relator groups with torsion and the {H}anna
  {N}eumann conjecture.
\newblock {\em Bull. LMS}, 2006.

\end{thebibliography}
%
%

\end{document}